\definecolor{darkorange}{rgb}{1,0.6,0}
\definecolor{oryell}{rgb}{1,0.75,0}
\definecolor{yellow}{rgb}{1,0.85,0}
\definecolor{cverde}{rgb}{0,.5,0}
\definecolor{light}{rgb}{.8,.8,.8}
\definecolor{dark}{rgb}{.5,.5,.5}
\numberwithin{equation}{section}
\newcommand{\bm}[1]{\mathbf{#1}}
\newcommand{\e}{{\mathrm{e}}}
\newcommand{\ii}{{\mathbf{i}}}
\newcommand{\R}{{\mathbb{R}}}
\newcommand{\cU}{{\mathcal{U}}}
\newcommand{\cV}{{\mathcal{V}}}
\newcommand{\cF}{{\mathcal{F}}}
\newcommand{\cC}{{\mathcal{C}}}
\newcommand{\ttt}[1]{\texttt{#1}}
\newcommand{\pib}{{\boldsymbol{\pi}}}
\DeclareMathOperator{\circu}{circ}
\newcounter{algo@row}
\newcounter{algo@rowindent}
\newcommand{\algofont}[1]{\textbf{#1}}
\newcommand{\algonumbersize}[1]{\scriptsize{#1}}
\newcommand{\algopreitem}[1][\arabic{algo@row}]{\texttt{\algonumbersize{#1}}}
\newcommand{\algoitemskip}{\hspace{\value{algo@rowindent}cc}}
\newenvironment{algo}{\vskip.3em\small%
  \begin{list}{\algopreitem\texttt{\algonumbersize{:}}}{%
      \usecounter{algo@row}%
      \setcounter{algo@rowindent}{0}%
      \setlength{\itemindent}{2em}%
      \setlength{\labelwidth}{2em}
      \setlength{\parsep}{0cm}%
    }%
}{
  \end{list}\vskip-.5em
}
\newcommand{\algonewnestedopen}[2]{
  \newcommand{#1}[1][]{%
    \ifthenelse{\equal{##1}{}}{\item}{\item[{\algopreitem[##1]}]}
    \algoitemskip\algofont{#2}%
    \addtocounter{algo@rowindent}{1}%
    \ignorespaces
  }
}
\newcommand{\algonewnestedaux}[2]{
  \newcommand{#1}[1][]{
    \addtocounter{algo@rowindent}{-1}
    \ifthenelse{\equal{##1}{}}{\item}{\item[{\algopreitem[##1]}]}
    \algoitemskip\algofont{#2}%
    \addtocounter{algo@rowindent}{+1}%
    \ignorespaces
  }
}
\newcommand{\algonewnestedclose}[2]{
  \newcommand{#1}[1][]{
    \addtocounter{algo@rowindent}{-1}
    \ifthenelse{\equal{##1}{}}{\item}{\item[{\algopreitem[##1]}]}
    \algoitemskip\algofont{#2}%
    \ignorespaces
  }
}
\newcommand{\algonewcommand}[2]{
  \newcommand{#1}[1][default]{
    \ifthenelse{\equal{##1}{default}}{\item}{\item[{\algopreitem[##1]}]}%
    \algoitemskip\algofont{#2}%
    \ignorespaces
  }%
}
\newcommand{\algonewkeyword}[2]{\newcommand{#1}{\algofont{#2}}}
\algonewcommand{\STATE}{\ignorespaces}
\algonewcommand{\INPUT}{Input: }
\algonewcommand{\pINPUT}{\phantom{Input: }}
\algonewcommand{\COMPUTE}{Compute: }
\algonewcommand{\OUTPUT}{Output: }
\algonewcommand{\pOUTPUT}{\phantom{Output: }}
\algonewnestedopen{\IF}{if }
\algonewnestedaux{\ELSEIF}{else if }
\algonewnestedaux{\ELSE}{else }
\algonewnestedclose{\ENDIF}{end if }
\algonewnestedopen{\FOR}{for }
\algonewnestedclose{\ENDFOR}{end for }
\algonewnestedopen{\WHILE}{while }
\algonewnestedclose{\ENDWHILE}{end while }
\algonewcommand{\BREAK}{break}%
\algonewkeyword{\For}{for }%
\algonewkeyword{\To}{to }%
\algonewkeyword{\Do}{do }%
\algonewkeyword{\If}{if }%
\algonewkeyword{\Then}{then }%
\algonewkeyword{\Else}{else }%
\algonewkeyword{\End}{end }%
\algonewkeyword{\AND}{and }%
\algonewkeyword{\True}{true }%
\algonewkeyword{\False}{false }%
\algonewkeyword{\Call}{call }%
\algonewkeyword{\Function}{function }%
\begin{document}           

\pagestyle{myheadings}
\markboth{A. Concas, C. Fenu, G. Rodriguez, and R. Vandebril}{The seriation
problem in the presence of a multiple Fiedler value}

\title{The seriation problem in the presence of a double Fiedler value}\footnotetext{Version \today}

\author{A. Concas\thanks{Dipartimento di Matematica e Informatica,
Universit\`a di Cagliari, viale Merello 92, 09123 Cagliari, Italy. E-mail: 
\texttt{anna.concas@unica.it}, \texttt{kate.fenu@unica.it},
\texttt{rodriguez@unica.it}.
Research supported in part by INdAM-GNCS.}
\and C. Fenu\footnotemark[1]
\and G. Rodriguez\footnotemark[1]
\and R. Vandebril\thanks{Department of Computer Science, KU Leuven, Celestijnenlaan 200A,
3001 Heverlee, Leuven, Belgium. E-mail: \texttt{raf.vandebril@cs.kuleuven.be}}}

\maketitle

\begin{abstract} 

Seriation is a problem consisting of seeking the best enumeration order of a
set of units whose interrelationship is described by a bipartite graph, that
is, a graph whose nodes are partitioned in two sets and arcs only connect nodes
in different groups. An algorithm for spectral seriation based on the use of
the Fiedler vector of the Laplacian matrix associated to the problem was
developed by Atkins et al., under the assumption that the Fiedler value is
simple. In this paper, we analyze the case in which the Fiedler value of the
Laplacian is not simple, discuss its effect on the set of the admissible
solutions, and study possible approaches to actually perform the computation.
Examples and numerical experiments illustrate the effectiveness of the proposed
methods.
\end{abstract}

\section{Introduction}\label{sec:intro}

By \emph{seriation}, we refer to an important ordering problem
that aims at recovering the best enumeration order of a set of units in terms
of a chosen correlation function.
Such order can be chronological, or any sequential structure which
characterizes the data. The notion of seriation has been formulated in several
ways and appears in various fields, such as archaeology, anthropology,
psychology, and biology~\cite{brusco06,genome,matharcheo,mirkin1984}. The first
systematic formalization of the seriation problem was made by Petrie in
1899~\cite{petrie}, even if the term seriation was used before in archaeology;
see Concas et al.~\cite{pqser19} for an overview.

When the ordering is chronological, seriation concerns relative dating of
objects or events, which is employed when absolute dating methods cannot be
used. This means that the order lacks a direction, in the sense that the units
are placed in a sequence which can be read in both directions. Seriation finds
another application in \emph{de novo} genome sequencing. In this case, from a
randomly oversampled DNA strand (the so-called \emph{reads}) the whole sequence
is reconstructed. Oversampling is necessary to increase the probability of all
parts being covered. The reads which overlap are then considered as similar and
their ordering is obtained by placing similar reads close to each other.

In all the applications, seriation data are usually given in terms of a matrix
of size $n\times m$, called the \emph{data matrix}, whose row and/or column
indices represent the elements to be ordered. In archaeology, the rows of the
data matrix correspond to the units (e.g., the sites) and the columns represent
the types of the archaeological findings detected in the units. Each unit is
characterized by the presence of certain artefacts, which are in turn
classified in types. Piana Agostinetti and Sommacal~\cite{ps05}, the authors
refer to the data matrix as either incidence matrix or abundance matrix,
depending on the archaeological data representation. In the first case, the
data are reported by using a binary representation, i.e., an element in the
position $(i,j)$ is equal to $1$ if type $j$ is present in the unit $i$, and
$0$ otherwise. In the second case, each element of the data matrix reports the
number of objects belonging to a certain type in a given unit, or its
percentage. In this paper, following the usual terminology used in complex
networks theory, we will refer to the binary representation as an
\emph{adjacency matrix}. More details can be found in~\cite{pqser19}. The
purpose of determining a relative chronology consists of obtaining an ordering
of the rows and columns of the data matrix that places the nonzero entries
close to its diagonal. Given the variety of applications, some software
packages have been developed in the past to manipulate seriation data;
see~\cite{pqser19} for an overview.

A spectral algorithm for the solution of the seriation problem was considered
by Atkins et al.~\cite{atkins1998spectral}, and an optimized Matlab
implementation has recently been proposed by Concas et al.~\cite{pqser19}.
Each solution is a permutation of the nodes which solves a particular
optimization problem.
The method is based on the use of the Fiedler vector of the Laplacian
matrix associated with the problem, and describes the set of solutions in terms
of a data structure known as a PQ-tree.
In this paper, we discuss the implications of the presence of a multiple
Fiedler value, an issue which has been disregarded up to now. Our 
interest is mainly for the case of multiplicity two, for which we illustrate 
the effects on the set of solutions.

The plan of the paper is the following. Section \ref{sec:mathback} reviews the
necessary mathematical background, sets up the terminology to be used in the
rest of the paper, and describes the data structures
used to store the solutions of the seriation problem.
The spectral algorithm and the special case of a multiple Fiedler value are
discussed in Section~\ref{sec:multfied}.
In Section~\ref{sec:casestudies}, we extensively analyze three example networks
whose Laplacian admits a double Fiedler value, showing the consequences on the
set of solutions of the seriation problem.
Section~\ref{sec:methods} describes two practical algorithms for computing the
admissible solutions, and Section~\ref{sec:numexp} reports some numerical
results. Finally, Section~\ref{sec:last} contains concluding remarks.

\section{Mathematical background}\label{sec:mathback}

Here we review some mathematical
concepts that will be used in the following.
Matrices will be denoted by upper case roman letters, vectors by lower case
bold letters, and their elements by lower case doubly and singly indexed
letters, respectively.

Let $G$ be a simple graph with $n$ nodes. The adjacency matrix $F\in\R^{n\times n}$ associated to $G$ contains in position $(i,j)$ the weight of the edge connecting node $i$ to node $j$. 
If the two nodes are not connected, then $f_{ij}=0$.
If a graph is unweighted, then the weights are either 0 or 1.
The adjacency matrix is symmetric if the graph is undirected.

The (unnormalized) \emph{graph Laplacian} of a symmetric irreducible matrix 
$F\in\R^{n\times n}$ is the symmetric, positive semidefinite matrix 
$$
L=D-F,
$$
where $D=\diag(d_1,\ldots,d_n)$ is the \emph{degree matrix}, whose $i$th
diagonal element equals the sum of the weights of all the edges starting from
node $i$ in the undirected network defined by $F$, that is, 
$d_i=\sum_{j=1}^n f_{ij}$.
In the case of an unweighted graph, $d_i$ is simply the number of nodes
connected to node $i$. It is immediate to observe that $0$ is an eigenvalue of
the graph Laplacian, with associated eigenvector $\bm{e} =
(1,\dots,1)^T\in\R^n$, and that all the eigenvalues
$\lambda_1=0\leq\lambda_2\leq\dots\leq\lambda_n$ are non-negative. 

The smallest eigenvalue of $L$ with associated eigenvector orthogonal to
$\bm{e}$ is called the \emph{Fiedler value}, or the \emph{algebraic
connectivity}, of the graph described by $F$.
The corresponding normalized eigenvector is the \emph{Fiedler
vector}~\cite{fiedler1973algebraic,fiedler1975property,fiedler1989laplacian}.
Alternatively, the Fiedler value may be defined to be any vector $\bm{x}$ that achieves the minimum
$$
\min_{\bm{x}^{T}\bm{e}=0,\ \bm{x}^{T}\bm{x}=1}\bm{x}^{T}L\bm{x}.
$$

In this paper we describe the seriation problem in terms of bipartite graphs,
since the interrelationship between the units to be reordered can be expressed
in terms of such graphs. A \emph{bipartite graph} $G$ is a graph whose vertices
can be divided into two disjoint sets $U$ and $V$ such
that every edge connects a node in $U$ to one in $V$. In our archaeological
metaphor the sets $U$ and $V$, containing $n$ and $m$ nodes respectively,
represent the units and the types of the findings. Hence, the adjacency data
matrix $A\in\R^{n\times m}$ associated to the seriation problem can be
interpreted as the matrix which describes the connections in the bipartite
graph associated to the problem and it is obtained by setting $a_{i,j}=1$ if
unit $i$ contains objects of type $j$ and $0$ otherwise.

One approach for solving the seriation problem is based on the construction of a
symmetric \emph{similarity matrix}
$S$, whose elements $s_{ij}$ describe the likeness of the nodes $i,j\in U$~\cite{brainerd1951place,robinson1951method}.
One possible definition for it is through the product $S=AA^T$, being $A$ the
adjacency matrix of the bipartite graph associated to the problem.
In this case, $s_{ij}$ equals the number of types shared between unit $i$ and
unit $j$. 
The largest value on each row is the diagonal element, which reports the
number of types associated to each unit.
By applying the same permutation to the rows and columns of $S$ in order to
cluster the largest values close to the main diagonal, one obtains the
permutation of the rows of $A$ that brings close the units more similar for
what concerns types.
It is worth noting that this rows and columns permutation is
not uniquely defined.

The \emph{Robinson method}~\cite{robinson1951method} is a technique
based on a different similarity matrix. Starting from an abundance matrix $A \in \R^{n \times m}$ whose
entries are in percentage form (the sum of each row is 100), it computes the
similarity matrix $S$ by a particular rule, leading to a symmetric matrix of
order $n$ with entries between $0$ (rows with no types in common) and $200$,
which corresponds to units containing exactly the same types.
Then, the method searches for a permutation matrix $P$ such that $PSP^T$ has
its largest entries as close as possible to the main diagonal. The same
permutation determines the chronological order for the units.

The procedure of finding a permutation matrix $P$ is not uniquely specified.
One way to deal with it is given by the so called \emph{Robinson's form}, which
places larger values close to the main diagonal, and lets off-diagonal entries
be nonincreasingly ordered moving away from the main diagonal.
Such a matrix is also called $R$-matrix, or it is said to be in $R$-form;
see~\cite{pqser19} for details.
A symmetric matrix is pre-$R$ if and only if there exists a simultaneous
permutation of its rows and columns which takes it to Robinson's form, so
it corresponds to a well-posed ordering problem;
see~\cite{chepoi1997,laurent2017lex,laurent2017,prea2014,seston2008}.

A subset of the possible permutations of the elements of a set can be encoded
in a data structure called \emph{PQ-tree}, originally introduced by Booth and
Lueker~\cite{booth1976testing}.
A PQ-tree $T$ over a set $U = \{u_1,u_2,\dots,u_n\}$ is a rooted tree whose
leaves are elements of $U$ and whose internal (non-leaf) nodes are
distinguished as either P-nodes or Q-nodes. 
The only difference between them is the way in which their children are
treated. In particular, the children of a P-node can be arbitrarily permuted,
while the order of those of a Q-node can only be reversed. The root of the tree
can either be a P or a Q-node; see~\cite{pqser19} for a Matlab implementation
of PQ-trees.

We now briefly review the spectral algorithm for the seriation
problem introduced in~\cite{atkins1998spectral} and implemented
in~\cite{pqser19}.
Starting from a pre-R matrix, it constructs a PQ-tree describing the set of all
the row and column permutations that lead to an $R$-matrix.

Given the set of units $U=\{u_1,u_2,\dots,u_n\}$, the notation 
$i\preccurlyeq j$ indicates that $u_i$ precedes $u_j$ in a chosen ordering.
Then, a symmetric bivariate \emph{correlation function} $f$ can be used to
describe the desire for units $i$ and $j$ to be close to each other in the
sought sequence; see~\cite{atkins1998spectral}.
The aim of the algorithm is to find all index permutation vectors
$\pib=(\pi_1,\ldots,\pi_n)^T$ such that 
\begin{equation}\label{fperm}
\pi_i\preccurlyeq\pi_j\preccurlyeq\pi_k \quad \iff \quad 
f(\pi_i,\pi_j)\geq f(\pi_i,\pi_k) \quad \text{and} \quad
f(\pi_j,\pi_k)\geq f(\pi_i,\pi_k).
\end{equation}
Setting $f_{ij}=f(i,j)$ defines a matrix $F$ with the same role as the
similarity matrix $S$ aforementioned.

If a seriation data set is described by an adjacency (or abundance) matrix $A$,
we set $F=AA^T$.
If $F$ is pre-$R$, there exists a row/column permutation that takes it in
$R$-form. 
Unfortunately, this property cannot be stated in advance in general.

The approach adopted in~\cite{atkins1998spectral} (see
also~\cite{estrada2010network}) is to consider the constrained optimization
problem
$$
\begin{aligned}
&\text{minimize} & &
h(\bm{x}) = \frac{1}{2}\sum_{i,j=1}^{n} f_{ij}(x_i-x_j)^2, \\
&\text{subject to} & & \sum_i x_i = 0 \quad \text{and} \quad \sum_i x_i^2 = 1.
\end{aligned}
$$
The value of the function $h(\bm{x})$ is small for a vector $\bm{x}$ such that
each pair $(u_i,u_j)$ of highly correlated units is associated to components
$x_i$ and $x_j$ with close values.
Once the minimizing vector $\bm{x}_{\min}$ is computed, it is sorted according
to either nonincreasing or nondecreasing values, yielding
$\bm{x}_\pib=(x_{\pi_1},\ldots,x_{\pi_n})^T$.
The permutation $\pib$ of the units realizes \eqref{fperm}.

Letting $D = \diag(d_i)$ be the degree matrix, 
the previous minimization problem can be rewritten as 
\begin{eqnarray*}
\min_{\|\mathbf{x}\|=1,\ \bm{x}^T\bm{e} = 0}
\mathbf{x}^TL\mathbf{x},
\end{eqnarray*}
where $L=D-F$ is the Laplacian of the correlation matrix $F$.
The two constraints require that $\mathbf{x}$ be a unit vector orthogonal to
$\mathbf{e}$.
This shows, by the Courant--Fischer--Weyl min-max principle, that any Fiedler
vector is a solution to the constrained minimization problem.

The problem is well posed only when $F$ is pre-R.
Nevertheless, a real data set may be inconsistent, in the sense that it may
not necessarily lead to a pre-R similarity matrix.
In such cases, it may be useful to construct an approximate solution to the
seriation problem, and sorting the entries of the Fiedler vector generates an
ordering that tries to bring highly correlated elements close to each other.
We refer to such orderings as \emph{admissible permutations}.

\section{Multiple Fiedler values in seriation}\label{sec:multfied}

In this section we analyze the case of the presence of a multiple Fiedler value
and its effect on the spectral algorithm discussed above.

Let us assume that the Fiedler value has multiplicity $k$, and let
$\bm{q}_1,\ldots,\bm{q}_k$ be an orthonormal basis of the corresponding
eigenspace $\mathcal{F}$.
For each $\bm{x}\in\mathcal{F}$, there is a vector
$\tilde{\bm{y}}=(y_1,\ldots,y_k)^T$ such that
\begin{equation}\label{Fiedvec}
\bm{x}=Q_k\tilde{\bm{y}},
\end{equation}
where $Q_k=[\bm{q}_1,\ldots,\bm{q}_k]$.
We remind the reader that a solution to the seriation problem is determined by
sorting the vector $\bm{x}$ either nonincreasingly or nondecreasingly.

When $k=1$ there is in general only one permutation which solves the problem,
together with its reverse. There are multiple solutions if the eigenvector
$\bm{x}$ has $\ell$ multiple equal components. In this case, there will be
$\ell!$ solutions.

When $k>1$, after extending $Q_k$ to a square orthogonal matrix $Q$, we can
write $\bm{x}=Q\bm{y}$, with
$$
\bm{y} = \begin{bmatrix} \tilde{\bm{y}} \\ \bm{0} \end{bmatrix}\in\R^n.
$$
Although it is clear that only the first $k$ entries are relevant in
determining $\bm{x}$, it is not trivial to understand how many permutations are
allowed to sort $\bm{x}$ when the components of $\tilde{\bm{y}}$ vary.

Let us analyze the situation where $\bm{q}_i=\bm{e}_i$, the vectors of the
canonical basis in $\R^n$, $i=1,\ldots,k$, so that we may set $Q=I$,
Even in the case $\bm{x}=\bm{y}$, the conclusion is not trivial.
If the first $k$ components of $\bm{y}$ are different from zero and distinct,
then the indexes associated to the last $n-k$ zero components admit 
$(n-k)!$ equivalent permutations.
We can consider such indexes in the whole vector $\bm{y}$ as grouped in
a unique ``vector'' index, as the corresponding components all share the same
position in each possible sorting. 
Under this assumption, the number of different orderings for $\bm{y}$
is $(k+1)!$.
Substituting to the vector index all its possible permutations, the 
number of admissible solutions grows to 
\begin{equation}\label{wrong}
(k+1)!(n-k)!. 
\end{equation}
If there are groups of equal components in $\tilde{\bm{y}}$, this number is
going to increase accordingly.
The truth is that in the general case, that is when $Q\neq I$, the number of
admissible permutations depends upon the structure of the Fiedler vectors.

Concas et al.~\cite{pqser19} pointed out that non pre-R matrices can lead to
Laplacian matrices whose Fiedler value is not simple and conjectured, through
the following simple example, that the number of permutations
\eqref{wrong} may be incorrect. 

Let us consider the seriation problem described by the bipartite graph depicted
in Figure~\ref{fig:cycle} (left).
The relationship between nodes on the left (units) and nodes on the right
(types) is represented by edges. The adjacency matrix of this graph and the
resulting similarity matrix are, respectively
\[
E = \begin{bmatrix}
1 & 1 & 0 & 0 & 0\\
0 & 1 & 1 & 0 & 0\\
0 & 0 & 1 & 1 & 0\\
0 & 0 & 0 & 1 & 1\\
1 & 0 & 0 & 0 & 1
\end{bmatrix} \qquad \text{and} \qquad
S = EE^T = \begin{bmatrix}
2 & 1 & 0 & 0 & 1\\
1 & 2 & 1 & 0 & 0\\
0 & 1 & 2 & 1 & 0\\
0 & 0 & 1 & 2 & 1\\
1 & 0 & 0 & 1 & 2
\end{bmatrix}.
\]
Note that $S$ can be seen as the adjacency matrix of the graph shown
in Figure~\ref{fig:cycle} (right).

A solution to the seriation problem does not exist in this case, since the
associated graph describes
a \emph{cycle}: each unit is similar to surrounding units and the two extremal
units are similar to each other.
This leads to a non pre-R similarity matrix.
As shown in~\cite{pqser19}, the Fiedler value of the Laplacian $L = D-F$ has
multiplicity 2, so each vector belonging to the \emph{Fiedler plane} can be
sorted to obtain the admissible permutations of the units. 
In the same paper, the authors considered a randomized approximated approach,
which will be discussed in Section~\ref{sec:methods}, to determine such
permutations.
They found only 5 admissible permutations, much less than the number
$(k+1)!(n-k)!=(2+1)!(5-2)!=36$ determined in \eqref{wrong}.

In this paper, we will show that this estimate for the number of admissible
permutations was wrong, nevertheless, we will confirm the fact that when a
Fiedler value is multiple some constraints are imposed on the admissible
permutations of the units. In particular, we will show that their number
does not only depend on the multiplicity of the Fielder value, but also on the
structure of the underlying bipartite graph.

In the following, we often focus on the number of permutations found. Referring
to such a number is significant only to show that, in the cases analyzed, the
number of admissible solutions is always smaller than the forecast given by
\eqref{wrong}.
We stress the fact that solving the seriation problem consists of listing all
the admissible permutations of the nodes. Any theoretical analysis or
numerical algorithm must be able to produce such result.

\section{Three case studies}\label{sec:casestudies}

In this section, to gain insight in the behavior of other similar examples, we
analyze three different graphs whose Laplacian exhibits a double Fiedler
value: the modified star graph, the cycle graph, and the generalized Petersen
graph.

\subsection{The modified star graph}\label{sec:stargraph}

Consider the bipartite graph represented in Figure~\ref{fig:star} (left) whose
associated data matrix is
\begin{equation}
\label{eq:incidModStar}
E = \left[ \begin{array}{c}
\bm{e}_{n-1}^T \\ \hline I_{n-1}
\end{array} \right] \in  \R^{n \times (n-1)},
\end{equation}
where $\bm{e}_k=(1,\ldots,1)^T\in\R^k$, and $I_k$ denotes the identity matrix
of size $k$.
As already stated, $e_{i,j}=1$ indicates that unit $i$ contains  objects of
type $j$.

\begin{figure}
\begin{minipage}{.49\textwidth}
\begin{center}
\begin{tikzpicture}[-,>=stealth',auto,node distance=1cm,
	on grid,semithick, every state/.style={ball color=oryell,shading=ball,draw=none,text=black,inner sep=0pt}]
	\scriptsize
\node[state,ball color=yellow!70,shading=ball](A){$1$};
\node[state,ball color=yellow!70,shading=ball](C)[below=of A]{$2$};
\node[state,ball color=yellow!70,shading=ball](D)[below=of C]{$3$};
\node[state,ball color=yellow!70,shading=ball](E)[below=of D]{$4$};
\node[state,ball color=yellow!70,shading=ball](B)[below=of E]{$5$};
\node[state,ball color=yellow!70,shading=ball](F)[below=of B]{$6$};
\node[state](M)[right=of A,xshift=4cm]{$1$};
\node[state](H)[below=of M]{$2$};
\node[state](I)[below=of H]{$3$};
\node[state](L)[below=of I]{$4$};
\node[state](G)[below=of L]{$5$};
\path (A) edge (M);
\path (A) edge (H);
\path (A) edge (I);
\path (A) edge (L);
\path (A) edge (G);
\path (C) edge (M);
\path (D) edge (H);
\path (E) edge (I);
\path (B) edge (L);
\path (F) edge (G);
\end{tikzpicture}
\end{center}
\end{minipage}
\begin{minipage}{.49\textwidth}
\begin{center}
\begin{tikzpicture}[-,>=stealth',auto,node distance=2cm,
	on grid,semithick, every state/.style={ball color=red,shading=ball,draw=none,text=black,inner sep=0pt}]
	\scriptsize
\node[state,ball color=blue!30,shading=ball](A){$2$};
\node[state,ball color=blue!30,shading=ball](C)[below=of A, xshift=2cm, yshift=0.4cm]{$3$};
\node[state,ball color=blue!30,shading=ball](D)[below=of C, xshift=-0.8cm]{$4$};
\node[state,ball color=blue!30,shading=ball](B)[below=of A, xshift=-2cm, yshift=0.4cm]{$6$};
\node[state,ball color=blue!30,shading=ball](E)[below=of B, xshift=0.8cm]{$5$};
\node[state,ball color=blue!30,shading=ball](F)[below=of A]{$1$};
\path (F) edge (C);
\path (F) edge (B);
\path (F) edge (D);
\path (F) edge (E);
\path (F) edge (A);
\end{tikzpicture}
\end{center}
\end{minipage}
\caption{The bipartite graph associated with the data matrix $E$ \eqref{eq:incidModStar} with $n=6$ (left) which leads to the star graph $\mathcal{S}_6$ (right).}
\label{fig:star}
\end{figure}
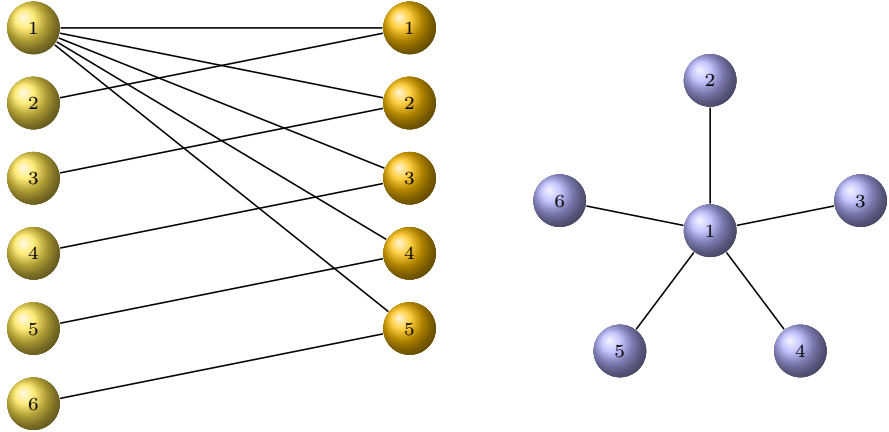

The resulting similarity and Laplacian matrices are given by
\begin{equation}\label{laplstar}
S = EE^T = \left[ \begin{array}{c|c}
n-1 & \bm{e}_{n-1}^T \\
\hline
\bm{e}_{n-1} & I_{n-1}
\end{array} \right], \qquad
L = D-S = \left[ \begin{array}{c|c}
n-1 & -\bm{e}_{n-1}^T \\
\hline
-\bm{e}_{n-1} & I_{n-1}
\end{array} \right], 
\end{equation}
where $D = \diag(d_1, \dots, d_n)$, $d_i = \sum_{j=1}^n s_{ij}$,
is the degree matrix associated to $S$.

The matrix $S$ can be interpreted as the adjacency matrix of a star graph; see
Figure~\ref{fig:star} (right).
A star graph $\mathcal{S}_n$ is a connected graph with $n$ vertices and $n-1$
edges, where one vertex, the \emph{center} of the star, has degree $n-1$ and the other $n-1$ vertices have
degree $1$. It is a special case of a complete bipartite graph in which one set
has one vertex and the other set contains the remaining $n-1$ vertices. 

Both the Laplacian and the similarity matrix \eqref{laplstar} are arrowhead
matrices, that is, real symmetric matrices of the form
\begin{equation}\label{arrowhead}
\begin{bmatrix}
\alpha & \mathbf{z}^T \\
\mathbf{z} & \Delta \\
\end{bmatrix}
\end{equation}
where $\alpha$ is a scalar, $\mathbf{z}\in\R^{n-1}$, and
$\Delta=\diag(\delta_1,\dots,\delta_{n-1})$.
From the Cauchy interlacing theorem \cite{wilkinson} for the eigenvalues of
Hermitian matrices, it follows that the sorted eigenvalues $\lambda_1, \dots,
\lambda_n$ of~\eqref{arrowhead} interlace the sorted elements $\delta_i$ of the diagonal matrix
$\Delta$.
If $\delta_1\geq \delta_2\geq \dots \geq \delta_{n-1}$ and if the eigenvalues
$\lambda_i$, $i=1,\dots,n$, are sorted accordingly, then the following
inequality holds
\begin{equation}\label{interlacing}
\lambda_1\geq \delta_1\geq \lambda_2 \geq \delta_2 \geq \dots \geq
\lambda_{n-1} \geq \delta_{n-1} \geq \lambda_n.
\end{equation}
If $\delta_i=\delta_{i-1}$ for some $i$, the above inequality implies that
$\delta_i$ is an eigenvalue of the arrowhead matrix~\eqref{arrowhead}
considered.

The following theorem identifies the eigenvalues of the Laplacian matrix in the
case of a star graph.

\begin{theorem}\label{theo:star}
Let $S$ be the adjacency matrix of a star graph $\mathcal{S}_n$. Then, the
spectrum of the Laplacian matrix $L$ consists of the three eigenvalues
0, 1, and $n$, with the second having multiplicity $n-2$.
\end{theorem}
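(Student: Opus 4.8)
The plan is to read the spectrum directly off the arrowhead structure of the Laplacian, exploiting the fact that the diagonal block in \eqref{laplstar} is the identity, so all of its entries coincide. First I would record that the Laplacian $L$ in \eqref{laplstar} is an arrowhead matrix of the form \eqref{arrowhead} with $\alpha = n-1$, $\mathbf{z} = -\bm{e}_{n-1}$, and $\Delta = I_{n-1}$; in particular $\delta_1 = \delta_2 = \cdots = \delta_{n-1} = 1$.

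The key step is the interlacing inequality \eqref{interlacing}. Since every $\delta_i$ equals $1$, the chain $\lambda_1 \geq 1 \geq \lambda_2 \geq 1 \geq \cdots \geq 1 \geq \lambda_n$ sandwiches each of the interior eigenvalues $\lambda_2, \ldots, \lambda_{n-1}$ between two copies of $1$, forcing $\lambda_2 = \cdots = \lambda_{n-1} = 1$. This already produces the eigenvalue $1$ with multiplicity $n-2$ and leaves only the two extreme eigenvalues $\lambda_1$ and $\lambda_n$ to be identified.

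To pin these down I would use two facts. First, as recalled in the background, $0$ is always an eigenvalue of a graph Laplacian, with eigenvector $\bm{e}$, and it is the smallest one, so $\lambda_n = 0$. Second, the trace of $L$ equals $(n-1) + (n-1) = 2n-2$; equating this to the sum of the eigenvalues gives $\lambda_1 + (n-2)\cdot 1 + 0 = 2n-2$, hence $\lambda_1 = n$. Therefore the spectrum consists of $0$, $1$, and $n$, with $1$ of multiplicity $n-2$.

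I do not expect a serious obstacle. The only point demanding care is the bookkeeping in the interlacing step, namely verifying that the repeated diagonal values genuinely clamp the $n-2$ interior eigenvalues to $1$, together with checking that $\lambda_1 = n$ and $\lambda_n = 0$ are distinct from $1$ (true for $n \geq 2$), so that the multiplicity of $1$ is exactly $n-2$ and not larger. As an independent confirmation one could instead form the secular equation of the arrowhead matrix: the factor $(1-\lambda)^{n-2}$ accounts for the eigenvalue $1$, while the remaining rational equation simplifies to $\lambda(\lambda - n) = 0$, recovering $0$ and $n$.
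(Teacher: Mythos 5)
Your proof is correct and takes essentially the same route as the paper's: both read off the arrowhead structure of $L$ in \eqref{laplstar}, apply the Cauchy interlacing inequality \eqref{interlacing} with $\delta_1=\cdots=\delta_{n-1}=1$ to clamp the $n-2$ interior eigenvalues to $1$, and invoke the standard fact that $0$ is the smallest Laplacian eigenvalue. The only (inessential) difference is in pinning down the remaining eigenvalue: the paper exhibits the explicit eigenvector $\mathbf{v}=(-(n-1),1,\dots,1)^T$ with $L\mathbf{v}=n\mathbf{v}$, whereas you deduce $\lambda_1=n$ from the trace identity $\lambda_1+(n-2)\cdot 1+0=2n-2$; both are valid one-line arguments.
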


\begin{proof}
A well known result states that the smallest eigenvalue of the Laplacian  is
$\lambda_n=0$.
From the Cauchy interlacing theorem applied to the matrix $L$ in~\eqref{laplstar}, it follows (see
\eqref{interlacing}) that 1 is an eigenvalue with multiplicity $n-2$.
Setting $\mathbf{v}=(-(n-1),1,\dots,1)^T\in\R^n$, we see that
$L\mathbf{v} = n\mathbf{v}$, so that $\lambda_1=n$.
\end{proof}
\smallskip

\begin{corollary}
Let $S$ be an adjacency matrix of a star graph. Then, the Fiedler value has
multiplicity $n-2$ and the $n-2$ Fiedler vectors have a null component in the
position corresponding to the central node index.
\end{corollary}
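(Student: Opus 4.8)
The plan is to read off both assertions from Theorem~\ref{theo:star} together with the arrowhead structure of $L$ displayed in~\eqref{laplstar}.

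First I would settle which eigenvalue is the Fiedler value. By Theorem~\ref{theo:star} the spectrum of $L$ is $\{0,1,n\}$, with $1$ carrying multiplicity $n-2$. Since the star graph is connected, the eigenvalue $0$ is simple and its eigenspace is spanned by $\bm{e}$; because $L$ is symmetric, the eigenvectors associated with the distinct eigenvalues $1$ and $n$ are automatically orthogonal to $\bm{e}$. The Fiedler value is, by definition, the smallest eigenvalue admitting an eigenvector orthogonal to $\bm{e}$, and since $0$ is disqualified (its only eigenvectors are multiples of $\bm{e}$), the candidates are $1$ and $n$; as $1<n$, the Fiedler value equals $1$ and inherits its multiplicity $n-2$.

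Next I would establish the vanishing of the central component. Placing the center at position $1$ as in~\eqref{laplstar}, let $\bm{x}=(x_1,\bm{w}^T)^T$ with $\bm{w}\in\R^{n-1}$ be any eigenvector associated with the eigenvalue $1$, so that $L\bm{x}=\bm{x}$. Splitting this identity according to the block form of~\eqref{laplstar}, the lower block reads $-x_1\bm{e}_{n-1}+I_{n-1}\bm{w}=\bm{w}$, which collapses to $-x_1\bm{e}_{n-1}=\bm{0}$ and hence forces $x_1=0$. Thus every Fiedler vector has a null component in the position of the central node, as claimed.

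There is no serious obstacle here: once Theorem~\ref{theo:star} is in hand, both statements follow directly. The only point requiring a moment's care is the identification of the Fiedler value, where one must recall that the genuinely smallest eigenvalue $0$ is excluded because its eigenvector $\bm{e}$ is not orthogonal to itself, so the relevant minimum is taken over the two remaining eigenvalues. As an alternative to the block computation, the vanishing of $x_1$ can also be derived by observing that the eigenspace for $1$ is the orthogonal complement of $\spann\{\bm{e},\mathbf{v}\}$, with $\mathbf{v}=(-(n-1),1,\dots,1)^T$ as in the proof of Theorem~\ref{theo:star}; orthogonality to both vectors yields the relations $\sum_i x_i=0$ and $-(n-1)x_1+\sum_{i\geq 2}x_i=0$, whose combination gives $nx_1=0$.
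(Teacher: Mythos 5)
Your proposal is correct and follows essentially the same route as the paper: the core step is identical, namely splitting the eigenvalue equation $(L-I_n)\bm{x}=\bm{0}$ according to the arrowhead block structure of~\eqref{laplstar} and reading off from the last $n-1$ equations that the central component must vanish. Your explicit justification that the Fiedler value is $1$ rather than $0$ (since the eigenspace of $0$ is spanned by $\bm{e}$, which is not orthogonal to itself) is a point the paper leaves implicit, but it does not change the substance of the argument.
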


\begin{proof}
Without loss of generality we can assume that the first node is the central one
of degree $n-1$. 
To determine the Fiedler vectors one has to solve the homogeneous linear system
$(L-I_n)\bm{v}=0$, whose coefficient matrix is of the form
$$
L - I_n = \begin{bmatrix}
n-2 & -\mathbf{e}_{n-1}^T \\
-\mathbf{e}_{n-1} & 0 \\
\end{bmatrix}.
$$
The last $n-1$ equations of the system show that the first component of the
Fiedler vectors is always $0$, while the first equation implies that
the sum of their components is $0$.
\end{proof}
\smallskip

Since we are focusing on the case of a double Fiedler value, let us consider 
the \emph{modified star graph}.
In the bipartite graph of Figure~\ref{fig:star}, we add
$n-4$ nodes to the set of the types, and connect each of these nodes to two
consecutive nodes in the set of units, except the first ones. We obtain
the bipartite graph in Figure~\ref{fig:star_mod} (left).
The seriation data matrix associated to this graph is
\begin{equation}
\label{eq:mod_star}
E = \left[ \begin{array}{c|c}
\bm{e}_{n-1}^T & \bm{0}_{n-4}^T \\ 
\hline 
I_{n-1} & B_{n-1,3}
\end{array} \right] \in  \R^{n \times (2n-5)},
\end{equation}
where $\bm{0}_k\in\R^k$ is a null vector, and
$B_{k,\ell}\in\R^{k\times(k-\ell)}$ is the lower bidiagonal matrix whose
elements are 1 on the main diagonal and on the sub-diagonal, and zero
otherwise.

The resulting similarity matrix is
$$
S = \left[ \begin{array}{c|c}
n-1 & \bm{e}_{n-1}^T \\
\hline
\bm{e}_{n-1} & \begin{array}{c|c} T_{n-3} & O \\ \hline O & I_2 \end{array}
\end{array} \right], 
$$
where $O$ denotes a null matrix of suitable size and $T_{n-3}$ is the
tridiagonal matrix
\begin{equation}
\begin{bmatrix}\label{trid}
2 & 1 \\
1 & 3 & 1 \\
& \ddots & \ddots & \ddots \\
& & 1 & 3 & 1 \\
& & & 1 & 2 
\end{bmatrix}.
\end{equation}
The similarity matrix $S$ can be seen as the adjacency matrix of the 
modified star graph in
Figure~\ref{fig:star_mod} (right), which we denote by $\widehat{\mathcal{S}}_6$.

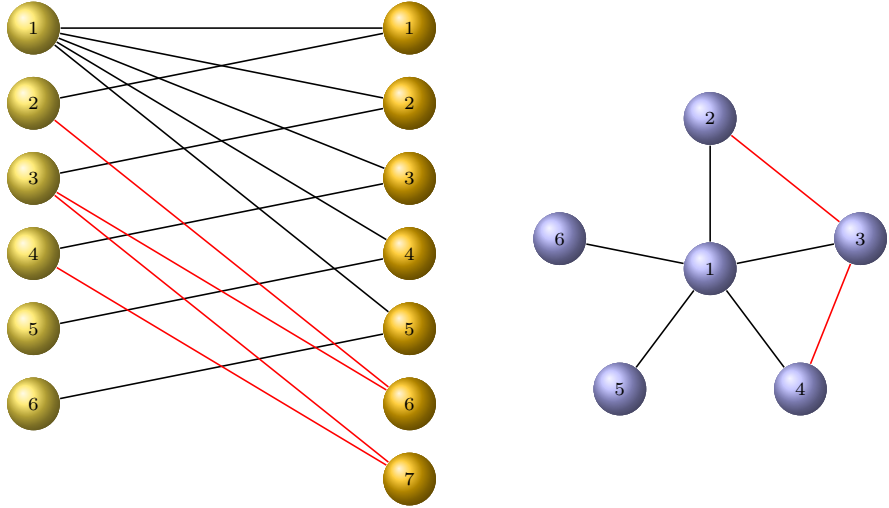
\begin{figure}[ht!]
\begin{minipage}{.49\textwidth}
\begin{center}
\begin{tikzpicture}[-,>=stealth',auto,node distance=1cm,
	on grid,semithick, every state/.style={ball color=oryell,shading=ball,draw=none,text=black,inner sep=0pt}]
	\scriptsize
\node[state,ball color=yellow!70,shading=ball](A){$1$};
\node[state,ball color=yellow!70,shading=ball](C)[below=of A]{$2$};
\node[state,ball color=yellow!70,shading=ball](D)[below=of C]{$3$};
\node[state,ball color=yellow!70,shading=ball](E)[below=of D]{$4$};
\node[state,ball color=yellow!70,shading=ball](B)[below=of E]{$5$};
\node[state,ball color=yellow!70,shading=ball](F)[below=of B]{$6$};
\node[state](M)[right=of A,xshift=4cm]{$1$};
\node[state](H)[below=of M]{$2$};
\node[state](I)[below=of H]{$3$};
\node[state](L)[below=of I]{$4$};
\node[state](G)[below=of L]{$5$};
\node[state](N)[below=of G]{$6$};
\node[state](O)[below=of N]{$7$};
\path (A) edge (M);
\path (A) edge (H);
\path (A) edge (I);
\path (A) edge (L);
\path (A) edge (G);
\path (C) edge (M);
\path (D) edge (H);
\path (E) edge (I);
\path (B) edge (L);
\path (F) edge (G);
\path[red] (C) edge (N);
\path[red] (D) edge (N);
\path[red] (D) edge (O);
\path[red] (E) edge (O);
\end{tikzpicture}
\end{center}
\end{minipage}
\begin{minipage}{.49\textwidth}
\begin{center}
\begin{tikzpicture}[-,>=stealth',auto,node distance=2cm,
	on grid,semithick, every state/.style={ball color=red,shading=ball,draw=none,text=black,inner sep=0pt}]
	\scriptsize
\node[state,ball color=blue!30,shading=ball](A){$2$};
\node[state,ball color=blue!30,shading=ball](C)[below=of A, xshift=2cm, yshift=0.4cm]{$3$};
\node[state,ball color=blue!30,shading=ball](D)[below=of C, xshift=-0.8cm]{$4$};
\node[state,ball color=blue!30,shading=ball](B)[below=of A, xshift=-2cm, yshift=0.4cm]{$6$};
\node[state,ball color=blue!30,shading=ball](E)[below=of B, xshift=0.8cm]{$5$};
\node[state,ball color=blue!30,shading=ball](F)[below=of A]{$1$};
\path (F) edge (C);
\path (F) edge (B);
\path (F) edge (D);
\path (F) edge (E);
\path (F) edge (A);
\path[red] (A) edge (C);
\path[red] (C) edge (D);
\end{tikzpicture}
\end{center}
\end{minipage}
\caption{Bipartite graph represented by matrix~\eqref{eq:mod_star} (left) and resulting graph $\widehat{\mathcal{S}}_6$ (right). The edges in red are the added ones.}
\label{fig:star_mod}
\end{figure}

The Laplacian matrix of $S$ is given by 
\begin{equation}\label{laplstar1}
L = D-S = \left[ \begin{array}{c|c}
n-1 & -\bm{e}_{n-1}^T \\
\hline
-\bm{e}_{n-1} & \begin{array}{c|c} \widetilde{T}_{n-3} & O^{\strut} \\ \hline O & I_2 
	\end{array}
\end{array} \right],
\end{equation}
where $O$ denotes a null matrix of suitable size and $\widetilde{T}_{n-3}$ is
like \eqref{trid}, but with the elements in the
sub- and in the super-diagonal of opposite sign.

The following theorem explains the behavior of the Fiedler value of the
Laplacian matrix in the case of the modified star graph
$\widehat{\mathcal{S}}_n$.

\begin{theorem}\label{theo:star2}
Let $S$ be the adjacency matrix of a modified star graph $\widehat{\mathcal{S}}_n$. Then,
the spectrum of the Laplacian matrix $L$ \eqref{laplstar1} contains the three
eigenvalues 0, 1, and $n$, with the second having multiplicity $2$, while the
remaining $n-4$ eigenvalues are in the interval $(1,5)$.
\end{theorem}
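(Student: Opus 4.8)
The plan is to pin down the three special eigenvalues explicitly and then trap the rest by interlacing, mirroring the proof of Theorem~\ref{theo:star} but now carrying along the tridiagonal block $\widetilde{T}_{n-3}$. The eigenvalue $0$ is automatic, since $L$ is a graph Laplacian and annihilates $\bm{e}_n$. For the eigenvalue $n$, I would verify directly that $\mathbf{v}=(-(n-1),1,\dots,1)^T$ satisfies $L\mathbf{v}=n\mathbf{v}$: the first row reproduces the computation of Theorem~\ref{theo:star}, while in every other row the $-1$ in the first column contributes $n-1$ and the row sum of the bottom-right block of \eqref{laplstar1} contributes $1$, because both $\widetilde{T}_{n-3}$ and $I_2$ have all row sums equal to $1$. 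This last remark is the structural fact that drives the whole argument: writing $\widetilde{T}_{n-3}=I_{n-3}+P$, one recognizes $P$ as the Laplacian of the path graph on $n-3$ vertices, so that $\widetilde{T}_{n-3}\bm{e}_{n-3}=\bm{e}_{n-3}$, and, since $P$ is a positive semidefinite unreduced Jacobi matrix, its eigenvalues are simple and lie in $[0,4)$. Hence $\widetilde{T}_{n-3}$ has the simple eigenvalue $1$, with eigenvector $\bm{e}_{n-3}$, together with $n-4$ simple eigenvalues in $(1,5)$.

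For the eigenvalue $1$, I would compute the kernel of $L-I_n$ directly from \eqref{laplstar1}. Partitioning a candidate vector as $\mathbf{w}=(w_1,\mathbf{u}_1,\mathbf{u}_2)$ with $\mathbf{u}_1\in\R^{n-3}$ and $\mathbf{u}_2\in\R^2$, the two equations coming from the $I_2$ block read $-w_1=0$, forcing $w_1=0$; the equations from the tridiagonal block then reduce to $(\widetilde{T}_{n-3}-I_{n-3})\mathbf{u}_1=\bm{0}$, so $\mathbf{u}_1$ is a multiple of $\bm{e}_{n-3}$ by the previous paragraph; and the arrowhead row reduces to $\sum_i u_i=0$. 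The solution set is therefore $\{(0,\,c\,\bm{e}_{n-3},\,a,b):(n-3)c+a+b=0\}$, a single linear condition on three free parameters, so the eigenspace has dimension exactly $2$. This is the crux of the statement and the precise place where the earlier estimate \eqref{wrong} fails: the eigenvalue $1$ does not acquire its multiplicity from the $I_2$ block alone, but from the interaction of that block with the constant eigenvector of $\widetilde{T}_{n-3}$, cut down by the orthogonality-to-$\bm{e}$ condition carried in the arrowhead row.

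Finally, to place the remaining $n-4$ eigenvalues in $(1,5)$, I would apply the Cauchy interlacing theorem \eqref{interlacing} to $L$ and to the principal submatrix $M$ obtained by deleting the first row and column, i.e.\ the block-diagonal matrix with blocks $\widetilde{T}_{n-3}$ and $I_2$. Its spectrum is the eigenvalue $1$ with multiplicity three, together with the $n-4$ eigenvalues of $\widetilde{T}_{n-3}$ lying in $(1,5)$. Interlacing then forces every eigenvalue of $L$ except the largest to be at most the top eigenvalue of $M$, which is $<5$; since $n\ge 5$, this simultaneously confirms that $n$ is the largest eigenvalue and is isolated from the remaining spectrum. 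The same interlacing bounds the still-unidentified $n-4$ eigenvalues below by $1$, so a priori they lie in $[1,5)$. The strictness at the lower end is where the multiplicity count is indispensable: having already shown that $1$ is an eigenvalue of multiplicity exactly $2$, none of these $n-4$ values can equal $1$, so they must be strictly greater than $1$. The main obstacle is thus not the interlacing but establishing the exact multiplicity of $1$, and in particular recognizing the path-Laplacian structure of $\widetilde{T}_{n-3}$, which at once supplies the constant eigenvector needed in the multiplicity argument and the upper bound $5$.
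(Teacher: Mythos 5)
Your proof is correct, and while it shares the paper's two key ingredients---the explicit eigenvector $(-(n-1),1,\dots,1)^T$ for the eigenvalue $n$ and the simplicity of the eigenvalue $1$ of $\widetilde{T}_{n-3}$ with eigenvector $\bm{e}_{n-3}$---its overall architecture is genuinely different. The paper proceeds by exhibiting four explicit eigenvectors $\bm{v}_1,\bm{v}_2,\bm{v}_3,\bm{v}_n$ and then arguing, via Gram--Schmidt, that any eigenvector orthogonal to $\bm{v}_1,\bm{v}_2,\bm{v}_n$ has vanishing first and last components; this reduces the residual spectrum to that of the principal submatrix built from $\widetilde{T}_{n-3}$, whose eigenvalues it localizes with Gershgorin ($1\le\lambda<5$) and whose simplicity at $1$ it takes from the classical fact that an unreduced symmetric tridiagonal matrix has distinct eigenvalues. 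You instead (i) obtain simplicity and the spectral bound in one stroke from the decomposition $\widetilde{T}_{n-3}=I_{n-3}+P$ with $P$ the path-graph Laplacian, whose eigenvalues are simple and lie in $[0,4)$---a tidier replacement for Gershgorin plus the tridiagonal citation; (ii) compute $\ker(L-I_n)$ directly, obtaining the solution set $\{(0,\,c\,\bm{e}_{n-3},\,a,\,b):(n-3)c+a+b=0\}$ of dimension exactly $2$, which is a more transparent certificate that the multiplicity of $1$ in $L$ is exactly $2$ than the paper's eigenvector bookkeeping; and (iii) trap the remaining $n-4$ eigenvalues in $[1,5)$ by Cauchy interlacing with the block-diagonal principal submatrix $M$, upgrading to $(1,5)$ via the multiplicity count. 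The trade-off: the paper's route identifies the residual eigenvalues exactly as the nontrivial spectrum of $\widetilde{T}_{n-3}$ (slightly more than the theorem asserts, and useful later when the Fiedler eigenspace is parametrized), whereas yours localizes them with less structural work but no exact identification. Two small points to tighten: the inequality \eqref{interlacing} is stated for arrowhead matrices with a diagonal block $\Delta$, while your $M$ is only block diagonal, so you should either invoke the general Cauchy interlacing theorem or note that conjugating $L$ by $\mathrm{diag}(1,W)$, with $W$ orthogonal and diagonalizing $M$, reduces to the stated form; and the strictness $\lambda_{\max}(P)<4$ deserves a word, e.g.\ via the known path-Laplacian spectrum $2-2\cos\bigl(k\pi/(n-3)\bigr)$ or Taussky's refinement of Gershgorin for irreducible matrices---though the paper's own strict bound at $5$ is asserted with the same brevity.
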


\begin{proof}
A direct computation shows that $\lambda_1=0$, $\lambda_2=\lambda_3=1$, and
$\lambda_n=n$, are eigenvalues of $L$ with associated eigenvectors
$$
\bm{v}_1=\bm{e}_n, \qquad
\bm{v}_2=\begin{bmatrix} 0 \\ -\bm{e}_{n-2} \\ n-2 \end{bmatrix}, \qquad
\bm{v}_3=\begin{bmatrix} 0 \\ -\bm{e}_{n-3} \\ n-3 \\ 0 \end{bmatrix}, \qquad
\bm{v}_n=\begin{bmatrix} 1-n \\ \bm{e}_{n-1} \end{bmatrix},
$$
where $\bm{e}_k=(1,\ldots,1)^T\in\R^k$.

By a simple application of the Gram-Schmidt process, we see that any vector
orthogonal to $\bm{v}_1$, $\bm{v}_2$, and $\bm{v}_n$ has a null first and
last component, like $\bm{v}_3$.
So, the remaining $n-4$ eigenvectors take the form 
$$
\bm{v}_i=\begin{bmatrix} 0 \\ \tilde{\bm{v}} \\ 0 \end{bmatrix}, \qquad
i=4,\ldots,n-1,
$$
with $\tilde{\bm{v}}\in\R^{n-2}$.
Given the expression \eqref{laplstar1} of matrix $L$, any such vector
$\tilde{\bm{v}}$ is an eigenvector of the principal submatrix
$$
\widetilde{L} = \left[ 
\begin{array}{c|c} \widetilde{T}_{n-3} & \bm{0}_{n-3} \\
\hline \bm{0}_{n-3}^T & 1^{\strut} \end{array} \right].
$$
Besides the eigenvalue $\lambda_2=1$, the remaining eigenvalues of
$\widetilde{L}$ are those of $\widetilde{T}_{n-3}$.

The Gershgorin circle theorems applied to $\widetilde{T}_{n-3}$ yields
$1\leq\lambda_i<5$, $i=3,\ldots,n-1$.
It is immediate to observe that $\lambda_3=1$ with associated eigenvector
$\bm{e}_{n-3}$.
It is a simple eigenvalue because a symmetric tridiagonal matrix with nonzero
subdiagonal elements has distinct eigenvalues \cite{ortega1960}. This completes
the proof.
\end{proof}

\begin{corollary}\label{theo:star2bis}
Let $S$ be the adjacency matrix of a modified star graph
$\widehat{\mathcal{S}}_n$. Then, its Fiedler value is equal to $1$ and has
multiplicity $2$.
\end{corollary}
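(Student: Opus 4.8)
The plan is to read the conclusion directly off Theorem~\ref{theo:star2}, combined with the definition of the Fiedler value as the smallest eigenvalue of $L$ whose eigenspace is orthogonal to $\bm{e}$. First I would recall that $\lambda_1=0$ is an eigenvalue with associated eigenvector $\bm{v}_1=\bm{e}_n$, and that, the modified star graph being connected, this eigenvalue is simple. Since its eigenvector is precisely $\bm{e}$, the value $0$ is disqualified from being the Fiedler value.

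Next I would invoke Theorem~\ref{theo:star2}, which lists $1$ as an eigenvalue of multiplicity $2$ and places all remaining eigenvalues either in the interval $(1,5)$ or at $n$, hence strictly above $1$. Thus $1$ is the second smallest eigenvalue of $L$. Because $L$ is symmetric, the eigenvectors $\bm{v}_2,\bm{v}_3$ belonging to the eigenvalue $1\neq 0$ are automatically orthogonal to $\bm{v}_1=\bm{e}$; one checks at a glance that the entries of each of the listed $\bm{v}_2,\bm{v}_3$ sum to zero, confirming membership in $\bm{e}^\perp$. Consequently $1$ is the smallest eigenvalue admitting an eigenvector orthogonal to $\bm{e}$, which is exactly the Fiedler value, and its multiplicity is $2$.

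There is essentially no obstacle here, as the corollary is an immediate consequence of the spectral description established in Theorem~\ref{theo:star2}. The only point deserving a word of care is verifying that the whole eigenspace of $1$ lies in $\bm{e}^\perp$, so that the min-max characterization over the constraint $\bm{x}^T\bm{e}=0$ genuinely attains the value $1$ with the correct multiplicity; this, however, follows at once from the orthogonality of eigenvectors of a symmetric matrix associated with distinct eigenvalues.
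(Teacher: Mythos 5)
Your proposal is correct and matches the paper's intent exactly: the paper states this corollary without a separate proof, treating it as an immediate consequence of Theorem~\ref{theo:star2}, which is precisely the deduction you spell out (eigenvalue $0$ excluded via its eigenvector $\bm{e}$, eigenvalue $1$ the second smallest with multiplicity $2$, and its eigenspace orthogonal to $\bm{e}$ by symmetry of $L$). Your added verifications---connectivity making $0$ simple and the zero column sums of $\bm{v}_2,\bm{v}_3$---are sound and merely make explicit what the paper leaves implicit.
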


In the case of the modified star graph $\widehat{\mathcal{S}}_n$, an orthogonal basis for the eigenspace $\cF$ corresponding to the Fiedler value is given by
$$
Q_2 = \begin{bmatrix} \mathbf{q_1} & \mathbf{q_2} \end{bmatrix}, 
$$
where $\mathbf{q_1}=\mathbf{v}_3$ and $\mathbf{q_2}=\mathbf{v}_2$.
For the sake of simplicity, we do not normalize the two eigenvectors.
Letting $\widetilde{\mathbf{y}}=(\alpha,\beta)^T\in\R^2\setminus\{(0,0)\}$,
every $\mathbf{x}\in \cF$ can be expressed as
\begin{equation}\label{eigvecStar}
\mathbf{x} = Q_2\widetilde{\mathbf{y}} 
= \begin{bmatrix} 
0 & 0  \\ 
-1 & -1 \\ 
\vdots & \vdots \\
-1 & -1 \\
n-3 & -1 \\
0 & n-2 
\end{bmatrix} \begin{bmatrix} \alpha \\ \beta \end{bmatrix} 
= \begin{bmatrix} 0 \\ -\alpha-\beta \\ \vdots \\ -\alpha-\beta \\
(n-3)\alpha-\beta \\ (n-2)\beta \end{bmatrix}.
\end{equation} 
The admissible permutations are then related to the possible reorderings of the
entries of $\mathbf{x}\in\cF$, and these sortings depend on the values of the
coefficients $\alpha$ and $\beta$.
We remark that they cannot be both zero, as $\bm{x}$ is an eigenvector.

We let $x_1=0$, $x_2=-\alpha-\beta$, $x_{n-1}=(n-3)\alpha-\beta$, and
$x_n=(n-2)\beta$.
The relative position of such components is governed by the following
inequalities, where we initially consider only strict inequality
\begin{equation}\label{ineqs}
\begin{cases}
x_2>x_1, \quad & \text{for } \alpha<-\beta, \\
x_{n-1}>x_1, \quad & \text{for } \alpha>\frac{1}{n-3}\beta, \\
x_n>x_1, \quad & \text{for } \beta>0, \\
x_{n-1}>x_2, \quad & \text{for } \alpha>0, \\
x_n>x_2, \quad & \text{for } \alpha>-(n-1)\beta, \\
x_n>x_{n-1}, \quad & \text{for } \alpha<\frac{n-1}{n-3}\beta.
\end{cases}
\end{equation}
When considering a particular ordering of the vector $\bm{x}$, multiple index
permutations are produced by permuting the components of the vector
$\bm{x}_2=(x_2,\ldots,x_2)^T\in\R^{n-3}$, containing the equal components in
\eqref{eigvecStar}.
To identify such permutations we consider the following cases:
\begin{enumerate}

\item\label{item1} $\alpha,\beta > 0$:
in correspondence to the three inequalities
\begin{equation}\label{starcase1}
0 < \alpha<\frac{1}{n-3}\beta, \qquad
\frac{1}{n-3}\beta<\alpha<\frac{n-1}{n-3}\beta, \qquad
\alpha>\frac{n-1}{n-3}\beta,
\end{equation}
we find the following increasingly ordered vectors $\bm{x}$, 
\begin{equation}\label{ab0}
\begin{bmatrix} \bm{x}_2 \\ x_{n-1} \\ x_1 \\ x_n \end{bmatrix}, \qquad
\begin{bmatrix} \bm{x}_2 \\ x_1 \\ x_{n-1} \\ x_n \end{bmatrix}, \qquad
\begin{bmatrix} \bm{x}_2 \\ x_1 \\ x_n \\ x_{n-1} \end{bmatrix},
\end{equation}
respectively.
In this case, we obtain $(n-3)!$ index permutations for each of the three
vectors, that is, $3(n-3)!$ admissible permutations.
They result from permuting the elements of $\bm{x}_2$.

For example, for $n=5$ we obtain the 6 permutations contained in the columns of
the following matrix
\begin{equation*}
\begin{bmatrix}
2 & 3 & 2 & 3 & 2 & 3 \\
3 & 2 & 3 & 2 & 3 & 2 \\
4 & 4 & 1 & 1 & 1 & 1 \\
1 & 1 & 4 & 4 & 5 & 5 \\
5 & 5 & 5 & 5 & 4 & 4
\end{bmatrix}.
\end{equation*}

\item\label{item2} $\alpha>0>\beta$:
now, the three inequalities
\begin{equation}\label{starcase2}
0<\alpha<-\beta, \qquad
-\beta<\alpha<-(n-1)\beta, \qquad
\alpha>-(n-1)\beta,
\end{equation}
correspond to the sorted vectors
\begin{equation}\label{a0b}
\begin{bmatrix} x_n \\ x_1 \\ \bm{x}_2 \\ x_{n-1} \end{bmatrix}, \qquad
\begin{bmatrix} x_n \\ \bm{x}_2 \\ x_1 \\ x_{n-1} \end{bmatrix}, \qquad
\begin{bmatrix} \bm{x}_2 \\ x_n \\ x_1 \\ x_{n-1} \end{bmatrix},
\end{equation}
which originate $3(n-3)!$ more possible index permutations for $\bm{x}$.

For $n=5$, we obtain 
\begin{equation*}
\begin{bmatrix}
5 & 5 & 5 & 5 & 2 & 3 \\
1 & 1 & 2 & 3 & 3 & 2 \\
2 & 3 & 3 & 2 & 5 & 5 \\
3 & 2 & 1 & 1 & 1 & 1 \\
4 & 4 & 4 & 4 & 4 & 4
\end{bmatrix}.
\end{equation*}

\end{enumerate}

The above cases are exhaustive. Indeed, the inequalities $\alpha,\beta<0$ and
$\alpha<0<\beta$ produce permutations which are the reverse of the ones
already considered in \ref{item1} and \ref{item2}, respectively.
The total number of permutations accounted for so far is
$$
N_1 = 6(n-3)!.
$$

We now consider equalities in \eqref{ineqs}, that is, we seek the values of the
parameters $\alpha$ and $\beta$ for which some components of the vector
$\bm{x}$ in \eqref{eigvecStar} become equal, besides those of $\bm{x}_2$.

It is important to remark that if two scalar components are equal, no new
permutations are introduced. 
For example, $(n-3)\alpha=\beta$ makes $x_1=x_{n-1}$, but the vector
orderings deriving from the permutation of these two components have already
been considered in the first two vectors of \eqref{ab0}.

On the contrary, when $x_2$ is equal to any of the three other different
components, then new index permutations are generated by permuting the
considered component with the entries of the vector $\bm{x}_2$.
When $\alpha\geq 0>\beta$, the special cases where $x_2=x_1$, $x_2=x_{n-1}$,
and $x_2=x_n$, correspond to the conditions
$$
\alpha=-\beta, \qquad
\alpha=0, \qquad 
\alpha=-(n-1)\beta,
$$
respectively, and lead to the sorted vectors
\begin{equation}\label{sortvecs}
\begin{bmatrix} x_n \\ \widetilde{\bm{x}}_{2,1} \\ x_{n-1} \end{bmatrix}, \qquad
\begin{bmatrix} x_n \\ x_1 \\ \widetilde{\bm{x}}_{2,n-1} \end{bmatrix}, \qquad
\begin{bmatrix} \widetilde{\bm{x}}_{2,n} \\ x_1 \\ x_{n-1} \end{bmatrix},
\end{equation}
where
$$
\widetilde{\bm{x}}_{2,k} = \begin{bmatrix} \bm{x}_2 \\ x_k \end{bmatrix}
= (x_2,\ldots,x_2,x_k)^T \in \R^{n-2}, \qquad k=1,n-1,n.
$$
Each vector in \eqref{sortvecs} produces $(n-2)!$ index permutations, from
which one must subtract those already considered in \eqref{ab0} and
\eqref{a0b}. 
For example, for the first vector of \eqref{sortvecs} the permutations
$$
\begin{bmatrix} x_n \\ \bm{x}_2 \\ x_1 \\ x_{n-1} \end{bmatrix}, \quad
\begin{bmatrix} x_n \\ x_1 \\ \bm{x}_2 \\ x_{n-1} \end{bmatrix}, 
$$
have already been accounted for in the first two vectors of \eqref{a0b}.
This leads to
$$
N_2 = 3\bigl((n-2)!-2(n-3)!\bigr) = 3(n-4)(n-3)!
$$
permutations. 
For $n=5$ we obtain 
\begin{equation*}
\begin{bmatrix}
5 & 5 & 5 & 5 & 2 & 3 \\
2 & 3 & 1 & 1 & 5 & 5 \\
1 & 1 & 2 & 3 & 3 & 2 \\
3 & 2 & 4 & 4 & 1 & 1 \\
4 & 4 & 3 & 2 & 4 & 4
\end{bmatrix}.
\end{equation*}
To conclude with, the vector $\bm{x}$ defined in \eqref{eigvecStar} possesses
\begin{equation}\label{right_star}
N = N_1+N_2 = 3(n-2)!
\end{equation}
admissible permutations for $\alpha,\beta\in\R\setminus\{(0,0)\}$.
Such permutations are one half of those foreseen by formula \eqref{wrong}, that
is, $3!(n-2)!$, confirming the conjecture that the structure of the problem
introduces some constraints on the number of admissible solutions for the
seriation problem.

\subsection{The cycle graph}\label{subsec:cycle}

The second example of a graph whose Laplacian has a multiple Fiedler value is
the cycle or circular graph $\cC_n$, whose vertices are connected in a closed
chain. The number of edges in $\cC_n$ equals the number of vertices and, since
every node has exactly two edges incident to it, every vertex has degree 2.
Hence a cycle is a regular graph, i.e., a graph in which each vertex has the
same degree $k$.

\begin{figure}
\begin{minipage}{.49\textwidth}
\begin{center}
\begin{tikzpicture}[-,>=stealth',auto,node distance=1cm,
	on grid,semithick, every state/.style={ball color=oryell,shading=ball,draw=none,text=black,inner sep=0pt}]
	\scriptsize
\node[state,ball color=yellow!70,shading=ball](A){$1$};
\node[state,ball color=yellow!70,shading=ball](C)[below=of A]{$2$};
\node[state,ball color=yellow!70,shading=ball](D)[below=of C]{$3$};
\node[state,ball color=yellow!70,shading=ball](E)[below=of D]{$4$};
\node[state,ball color=yellow!70,shading=ball](B)[below=of E]{$5$};
\node[state](F)[right=of A,xshift=4cm]{$1$};
\node[state](H)[below=of F]{$2$};
\node[state](I)[below=of H]{$3$};
\node[state](L)[below=of I]{$4$};
\node[state](G)[below=of L]{$5$};
\path (A) edge (F);
\path (A) edge (H);
\path (C) edge (H);
\path (C) edge (I);
\path (D) edge (I);
\path (D) edge (L);
\path (E) edge (L);
\path (E) edge (G);
\path (B) edge (G);
\path (B) edge (F);
\end{tikzpicture}
\end{center}
\end{minipage}
\begin{minipage}{.49\textwidth}
\begin{center}
\begin{tikzpicture}[-,>=stealth',auto,node distance=2cm,
	on grid,semithick, every state/.style={ball color=red,shading=ball,draw=none,text=black,inner sep=0pt}]
	\scriptsize
\node[state,ball color=blue!30,shading=ball](A){$1$};
\node[state,ball color=blue!30,shading=ball](C)[below=of A, xshift=2cm, yshift=0.4cm]{$2$};
\node[state,ball color=blue!30,shading=ball](D)[below=of C, xshift=-0.8cm]{$3$};
\node[state,ball color=blue!30,shading=ball](B)[below=of A, xshift=-2cm, yshift=0.4cm]{$5$};
\node[state,ball color=blue!30,shading=ball](E)[below=of B, xshift=0.8cm]{$4$};
\path (A) edge[bend left] (C);
\path (A) edge[bend right] (B);
\path (C) edge[bend left] (D);
\path (D) edge[bend left] (E);
\path (E) edge[bend left] (B);
\end{tikzpicture}
\end{center}
\end{minipage}
\caption{The bipartite graph associated with the data matrix $E$ in
\eqref{eq:incidCycle} for $n=5$ (left), which leads to the cycle graph $\cC_5$
(right).}
\label{fig:cycle}
\end{figure}
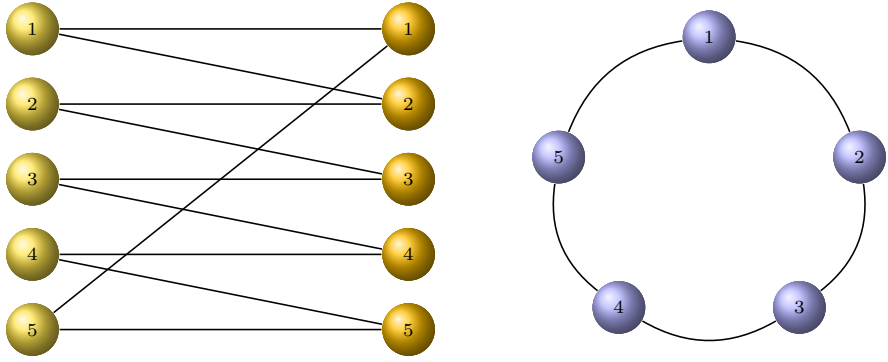

Consider the bipartite graph represented in Figure~\ref{fig:cycle} (left) with
associated data matrix
\begin{equation}
E = \left[ \begin{array}{cc}
B_{n,1}^T \\ 
\hline 
\bm{b}_n^{T\strut}
\end{array} \right] \in \R^{n \times n}.
\label{eq:incidCycle}
\end{equation}
where $B_{n,1}\in\R^{n\times(n-1)}$ is the lower bidiagonal matrix defined in
\eqref{eq:mod_star} and $\bm{b}_n=(1,\bm{0}_{n-2}^T,1)^T$, being
$\bm{0}_{k}$ the null vector of length $k$.
As $B_{n,1}^T\bm{b}_n=\bm{b}_{n-1}^{T}$,
its similarity matrix and Laplacian are, respectively,
\begin{equation}\label{sim:cycle}
S =EE^T = \left[ \begin{array}{c|c}
C_{n-1} & \bm{b}_{n-1} \\ 
\hline 
\bm{b}_{n-1}^T & 2^{\strut}
\end{array} \right],
\qquad
L = D-S = \left[ \begin{array}{c|c}
\widetilde{C}_{n-1} & -\bm{b}_{n-1} \\ 
\hline 
-\bm{b}_{n-1}^T & 2^{\strut}
\end{array} \right]
,
\end{equation}
where 
\[
C_{n-1} = \begin{bmatrix}
2 & 1 \\
1 & 2 & 1 \\
& \ddots & \ddots & \ddots \\
& & 1 & 2 & 1 \\
& & & 1 & 2 
\end{bmatrix}\in\R^{(n-1)\times (n-1)},
\]
and $\widetilde{C}_{n-1}$ is the tridiagonal matrix like $C_{n-1}$, with the
elements in the sub- and super-diagonal of opposite sign.
The matrix $S$ can be seen as the adjacency matrix of a cycle graph $C_n$; see
Figure~\ref{fig:cycle}.

The matrix $L$ is circulant, that is, it is fully specified by its first
column, while the other columns are cyclic permutations of the first one
with an offset equal to the column index~\cite{davis1979circulant}.
A basic property of a circulant matrix $C$ is that its spectrum is analytically
known. It is given by
\begin{equation}\label{spectr}
\sigma(C)=\{\widehat{C}(1), \widehat{C}(\omega),\dots,
\widehat{C}(\omega^{n-1})\},
\end{equation}
where
\begin{equation}\label{lapl}
\widehat{C}(\zeta) = \sum_{k=0}^{n-1} c_k \zeta^{-k}
\end{equation}
is the discrete Fourier transform of the first column 
$(c_0,c_1,\ldots,c_{n-1})^T$ of $C$, $\omega = \e^{\frac{2\pi\ii}{n}}$ is
the minimal phase $n$th root of unity, and $\ii$ the imaginary unit.

The next theorem states the behavior of the eigenvalues of the Laplacian matrix
in the special case of a circular graph.

\begin{theorem}\label{theo:cycle}
Let $E$ be the similarity matrix of a cycle graph with at least $n \geq 3$
vertices. Then, the eigenvalues of the Laplacian matrix $L = D-E$ are
coupled as follows
$$
\lambda_j = \lambda_{n-j+2}, \qquad 
j=2,\ldots,\left\lfloor\frac{n}{2}\right\rfloor+1,
$$ 
where $\lfloor m\rfloor$ denotes the minimal integer part of $m$.
In particular, if $n$ is odd $\lambda_1 = 0$ is the only simple eigenvalue. 
If $n$ is even, the eigenvalues $\lambda_1 = 0$ and $\lambda_{n/2}$, of
smallest and largest modulus, respectively, are the only simple ones. 
\end{theorem}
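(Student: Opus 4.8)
The plan is to exploit directly the circulant structure of $L$, which was already established, together with the analytic formula \eqref{spectr}--\eqref{lapl} for the spectrum of a circulant matrix. With respect to the natural cyclic ordering of the vertices, every vertex of $\cC_n$ has degree $2$ and is adjacent only to its two neighbours, so the first column of $L$ is $(2,-1,0,\ldots,0,-1)^T$. First I would substitute this column into \eqref{lapl} and evaluate at the roots of unity $\zeta=\omega^{j}$, $j=0,\ldots,n-1$. Since only three entries are nonzero, the sum collapses to
$$
\widehat{L}(\omega^{j}) = 2 - \omega^{-j} - \omega^{-(n-1)j}.
$$
Using $\omega^{n}=1$, the last exponent reduces to $\omega^{j}$, so that $\widehat{L}(\omega^{j}) = 2-\omega^{j}-\omega^{-j} = 2-2\cos\!\bigl(\tfrac{2\pi j}{n}\bigr)$. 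Setting $\lambda_{j}=\widehat{L}(\omega^{j-1})=2-2\cos\!\bigl(\tfrac{2\pi(j-1)}{n}\bigr)$ for $j=1,\ldots,n$ matches the labelling $\lambda_1=0$ and makes every eigenvalue real and nonnegative, as it must be for the symmetric positive semidefinite matrix $L$.

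The central step is then the coupling, which follows from the evenness of the cosine. Writing out the two indices in the claimed relation, I would observe that the arguments $\tfrac{2\pi(j-1)}{n}$ and $\tfrac{2\pi(n-j+1)}{n}$ sum to $2\pi$, whence
$$
\cos\!\Bigl(\tfrac{2\pi(n-j+1)}{n}\Bigr)=\cos\!\Bigl(2\pi-\tfrac{2\pi(j-1)}{n}\Bigr)=\cos\!\Bigl(\tfrac{2\pi(j-1)}{n}\Bigr),
$$
so $\lambda_{j}=\lambda_{n-j+2}$ for every $j$ in the stated range. Equivalently, the involution $j\mapsto n-j+2$ on the index set pairs each eigenvalue with a partner, and the range $j=2,\ldots,\lfloor n/2\rfloor+1$ enumerates each such pair exactly once.

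Finally, to obtain the simplicity statements I would determine the fixed points of this involution, that is, the indices whose cosine argument is an integer multiple of $\pi$. The index $j=1$ (argument $0$) is always fixed and yields the simple eigenvalue $\lambda_1=0$ of smallest modulus; when $n$ is even the index with argument $\pi$ is also fixed and yields the simple eigenvalue $2-2\cos\pi=4$ of largest modulus. To conclude that the remaining eigenvalues are genuinely double, and that no pair accidentally coincides with another, I would invoke the strict monotonicity of $\cos$ on $[0,\pi]$: the values $2-2\cos\!\bigl(\tfrac{2\pi j}{n}\bigr)$ are strictly increasing for $j=0,1,\ldots,\lfloor n/2\rfloor$, so distinct pairs carry distinct values and the multiplicities add up to $n$. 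I expect this last bookkeeping --- correctly handling the parity of $n$ at the fixed points and ruling out higher multiplicities --- to be the only delicate part, since the coupling itself is immediate from the cosine symmetry.
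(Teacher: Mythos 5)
Your proposal is correct and follows essentially the same route as the paper: both evaluate the circulant symbol $\widehat{L}(\omega^{k}) = 2-2\cos\bigl(\tfrac{2\pi k}{n}\bigr)$ via \eqref{spectr}--\eqref{lapl} and derive the coupling from the conjugate symmetry $\omega^{k}=\overline{\omega^{n-k}}$, i.e.\ the evenness of the cosine. Your final bookkeeping step --- identifying the fixed points of the involution $j\mapsto n-j+2$ and using the strict monotonicity of $\cos$ on $[0,\pi]$ to rule out accidental coincidences --- is a welcome addition that the paper leaves implicit when it says the thesis ``follows from'' the conjugacy property.
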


\smallskip
The property trivially results from $L$ being a symmetric circulant matrix.
For the sake of clarity, we give a simple proof.

\begin{proof}
First, we recover a well known result in graph theory which states that the
eigenvalue of smallest modulus of the Laplacian is $\lambda_1 = 0$.
Indeed, from~\eqref{spectr} and~\eqref{lapl}, it follows that the discrete
Fourier transform of the first column of $L$ is
$$
\widehat{L}(\zeta) = 2 -\zeta ^{-1}-\zeta ^{-(n-1)},
$$
and that $\lambda_1=\widehat{L}(1)=0$.
Next, let $k = 1, \dots, n-1$. From~\eqref{spectr} and~\eqref{lapl} we obtain
$$
\lambda_{k+1} = \widehat{L}(\omega^{k}) 
= 2 - e^{-\frac{2\pi i}{n}k}-e^{\frac{2\pi i}{n}k} 
= 2 - 2\cos(\theta_{k}),
$$
where $\theta_{k}=-\frac{2\pi i}{n}k$.
The thesis follows from the property $\omega^{k} = \overline{\omega^{n-k}}$.
\end{proof}

\smallskip
The theorem immediately implies the following.
\begin{corollary}
Let a graph satisfy the assumptions of Theorem~\ref{theo:cycle}. 
Then, its Fiedler value has multiplicity $2$.
\end{corollary}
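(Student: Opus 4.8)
The plan is to read the multiplicity directly off the explicit spectrum obtained in the proof of Theorem~\ref{theo:cycle}, combined with the definition of the Fiedler value as the smallest eigenvalue of $L$ whose eigenvector is orthogonal to $\bm{e}=(1,\dots,1)^T$. Since $L$ is a graph Laplacian, $\lambda_1=0$ is attained with eigenvector $\bm{e}$, so the Fiedler value is simply the smallest \emph{positive} eigenvalue of $L$. Writing the eigenvalues in the order $\lambda_{k+1}=g(k):=2-2\cos(2\pi k/n)$, $k=0,\dots,n-1$, as in the proof of the theorem, the task reduces to locating the minimum of $g$ over $k\in\{1,\dots,n-1\}$ and counting the indices that attain it.

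Next I would exploit the strict monotonicity of cosine. On the range $k\in\{0,1,\dots,\lfloor n/2\rfloor\}$ the argument $2\pi k/n$ lies in $[0,\pi]$, where $\cos$ is strictly decreasing, so the values $g(0)<g(1)<\dots<g(\lfloor n/2\rfloor)$ are strictly increasing; every remaining index is accounted for by the symmetry $g(k)=g(n-k)$. Hence the smallest positive value is $g(1)=2-2\cos(2\pi/n)$, and it is attained exactly at the indices $k$ with $\cos(2\pi k/n)=\cos(2\pi/n)$, i.e. $k\equiv\pm1\pmod n$, which for $n\ge 3$ are precisely the two distinct values $k=1$ and $k=n-1$. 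This identifies the Fiedler value as $\lambda_2=\lambda_n=2-2\cos(2\pi/n)$ with multiplicity $2$.

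The coupling $\lambda_j=\lambda_{n-j+2}$ of Theorem~\ref{theo:cycle} already forces the Fiedler value to occur with even multiplicity, so the only delicate point is the passage from ``at least two'' to ``exactly two'': one must exclude any further index yielding the minimal positive value. This is precisely where the strict monotonicity of $g$ on $\{0,\dots,\lfloor n/2\rfloor\}$ is used, and it is the single (minor) obstacle in the argument; it disposes of both parities of $n$ at once, the even case additionally confirming that $g(n/2)$, attained only at $k=n/2$, is the simple eigenvalue of largest modulus mentioned in the theorem.
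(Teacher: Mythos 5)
Your proof is correct and takes essentially the same route as the paper: the corollary is stated there as following immediately from Theorem~\ref{theo:cycle}, whose proof computes the circulant spectrum $\lambda_{k+1}=2-2\cos(2\pi k/n)$, which is exactly the formula your argument reads the multiplicity off of. Your explicit use of the strict monotonicity of $2-2\cos\theta$ on $[0,\pi]$ merely spells out the detail---ruling out multiplicity greater than two---that the paper leaves implicit in the word ``immediately.''
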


\smallskip
The normalized eigenvectors of an $n\times n$ circulant matrix are the columns
of the normalized Fourier matrix, that is, 
\begin{equation}\label{eigenvcirc}
\bm{v}_j=\frac{1}{\sqrt{n}} \left(1, \omega^{(j-1)}, \omega^{2(j-1)}, \ldots,
\omega^{(n-1)(j-1)} \right)^T, \qquad j = 1, \dots, n.
\end{equation}
A basis for the eigenspace corresponding to the Fiedler value is given by
$\{\bm{v}_2,\bm{v}_n\}$, where
the entries of $\bm{v}_n$ are the conjugates of those of $\bm{v}_2$.
To obtain eigenvectors with real entries we consider the vectors
\begin{equation}\label{eig_C}
\bm{w}_1 = \frac{(\bm{v}_2+\bm{v}_n)}{2},
\qquad
\bm{w}_2 = \frac{(\bm{v}_2-\bm{v}_n)}{2\ii},
\end{equation}
with components 
$$
(\bm{w}_1)_j = \cos\frac{2(j-1)\pi}{n}, \qquad
(\bm{w}_2)_j = \sin\frac{2(j-1)\pi}{n}, \qquad
j=1,\ldots,n.
$$
These vectors are, in fact, connected to the discrete cosine transform
(DCT) and the discrete sine transform (DST), respectively. 
They have many symmetries,
$$
(\bm{w}_1)_j = (\bm{w}_1)_{n-j+2}, \qquad 
(\bm{w}_2)_j = -(\bm{w}_2)_{n-j+2}, \qquad 
j=2,\ldots,\left\lfloor\frac{n}{2}\right\rfloor+1,
$$
and more relations are valid for $n$ either odd or even.

Every Fiedler vector $\bm{x}$ lies in the eigenspace generated by $\bm{w}_1$ and
$\bm{w}_2$, so that it can be expressed as
\begin{equation}
\label{eq:cycleFv}
\bm{x} = \alpha\bm{w}_1 + \beta\bm{w}_2,
\end{equation}
for $\alpha$ and $\beta\in \R$. 
Anyway, because of the many symmetries in the vectors $\bm{w}_1$ and
$\bm{w}_2$, it is impracticable to find a general rule to find the number of
admissible permutations, i.e., of all the possible reorderings of the
components of $\bm{x}$ for any $n$. 
The task is made harder by the fact that for specific values of the
coefficients $\alpha$ and $\beta$, groups of components of the Fiedler vector
$\bm{x}$ take the same value, generating bunches of admissible permutations.
We analyzed in detail the situation for $n=4,5,6,7$, determining 
8, 15, 30, and 49 permutations, respectively.
These results will be confirmed numerically in Section~\ref{sec:numexp}.
We report here the permutations obtained for $n=4$
$$
P_{x_{(n=4)}}= \begin{bmatrix}
2 & 2 & 3 & 3 & 3 & 3 & 4 & 4 \\
3 & 3 & 2 & 2 & 4 & 4 & 3 & 3 \\
1 & 4 & 1 & 4 & 1 & 2 & 1 & 2 \\
4 & 1 & 4 & 1 & 2 & 1 & 2 & 1
\end{bmatrix}.
$$
We remark, that according to formula \eqref{wrong} the number of admissible
solutions for $n=4,5,6,7$ should be 12, 36, 144, and 720, respectively.

\subsection{The generalized Petersen graph}\label{subs:petersen}

The generalized Petersen graph is another graph whose Fiedler value has
multiplicity $2$. It was introduced by Coxeter~\cite{coxeter1950} and it was given its
name later, in 1969, by Watkins~\cite{watkins1969}. We denote it by $GPG(n,k)$.
It has $2n$ vertices and $3n$ edges given, respectively, by
$$
\begin{aligned}
V(GPG(n,k)) &= \{u_i,v_i, 1\leq i\leq n\}, \\
E(GPG(n,k)) &= \{u_iu_{i+1},u_iv_i,v_iv_{i+k}|1\leq i\leq n\},
\end{aligned}
$$
where the subscripts are expressed as integers modulo $n$ ($n\geq 5$) and $k$
is the so called ``skip''.
Let $\cU(n,k)$ (respectively, $\cV(n,k)$) be the subgraph of $GPG(n,k)$
consisting of the vertices $\{u_i|1\leq i\leq n\}$ (respectively, $\{v_i, 1\leq
i\leq n\}$) and edges $\{u_iu_{i+1}|1\leq i\leq n\}$ (respectively,
$\{v_iv_{i+k}|1\leq i\leq n\}$). We will call $\cU(n,k)$ (respectively,
$\cV(n,k)$) the outer (respectively, inner) subgraph of $GPG(n,k)$.

The $2n\times 2n$ data matrix of the bipartite graph $GPG(n,k)$ has the block
structure
\begin{equation}\label{eq:adjGPG}
E = \begin{bmatrix}
U & I_n \\
I_n & V_k
\end{bmatrix}
\end{equation}
where $I_n$ is the $n\times n$ identity matrix, the block $U$ is the adjacency
matrix of the outer subgraph $\cU(n,k)$, it coincides with the adjacency
matrix $S$ \eqref{sim:cycle} of a cycle graph, with the diagonal elements equal
to 3.
The block $V_k$ is the adjacency matrix for the inner graph $\cV(n,k)$, whose
structure is determined by the skip $k$.
The matrices $U$ and $V_k$ are circulant. They are specified by their first
column given, respectively, by 
$$
\bm{c}=(0,1,\underbrace{0,\ldots,0}_{n-3},1)^T, \qquad 
\bm{c}^{(k)}=(\bm{0}_k,1,\underbrace{0,\ldots,0}_{n-2k-1},1,\bm{0}_{k-1})^T,
$$
where $\bm{0}_j$ denotes the null vector of length $j$, or the empty vector
when $j=0$.
We will write $U=\circu(\bm{c})$ and $V_k=\circu(\bm{c}^{(k)})$.

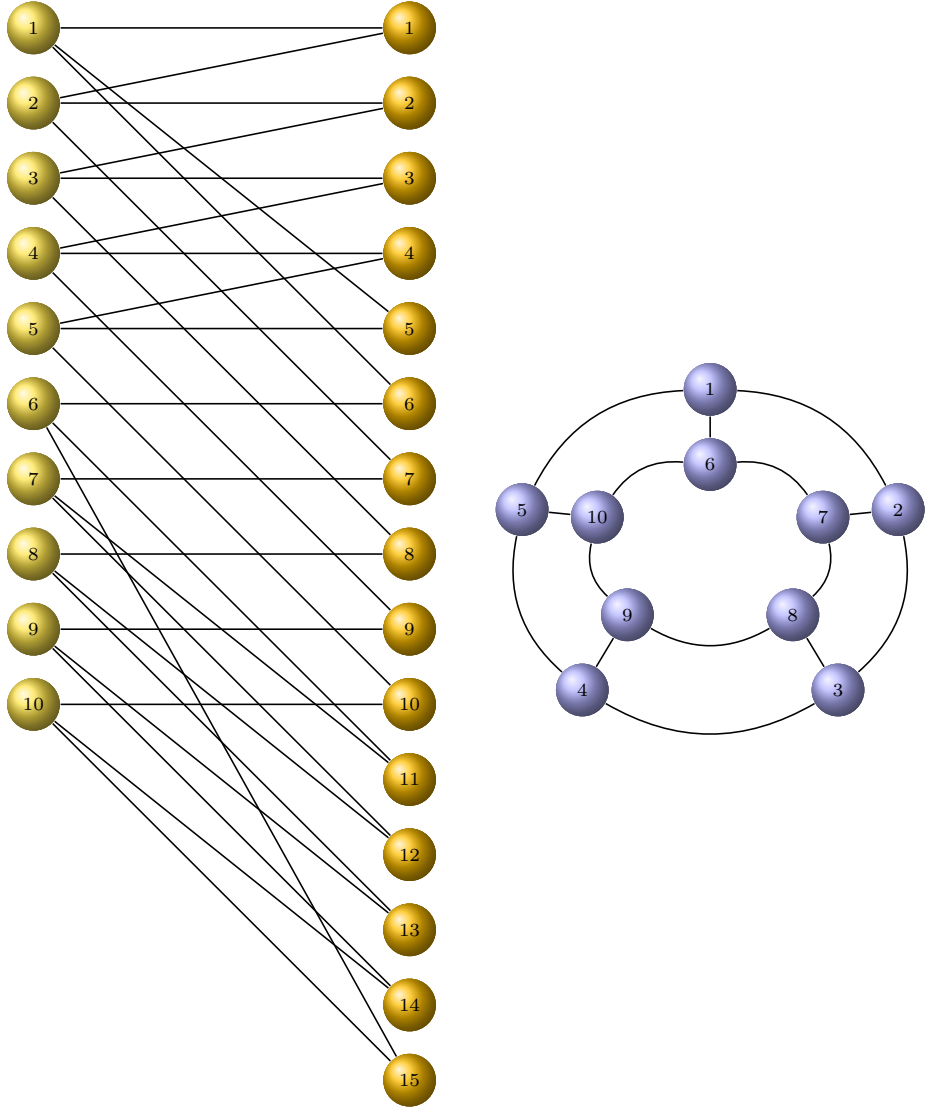
\begin{figure}[ht!]
\begin{minipage}{.49\textwidth}
\begin{center}
\begin{tikzpicture}[-,>=stealth',auto,node distance=1cm,
	on grid,semithick, every state/.style={ball color=oryell,shading=ball,draw=none,text=black,inner sep=0pt}]
	\scriptsize
\node[state,ball color=yellow!70,shading=ball](A){$1$};
\node[state,ball color=yellow!70,shading=ball](C)[below=of A]{$2$};
\node[state,ball color=yellow!70,shading=ball](D)[below=of C]{$3$};
\node[state,ball color=yellow!70,shading=ball](E)[below=of D]{$4$};
\node[state,ball color=yellow!70,shading=ball](B)[below=of E]{$5$};
\node[state,ball color=yellow!70,shading=ball](F)[below=of B]{$6$};
\node[state,ball color=yellow!70,shading=ball](G)[below=of F]{$7$};
\node[state,ball color=yellow!70,shading=ball](H)[below=of G]{$8$};
\node[state,ball color=yellow!70,shading=ball](I)[below=of H]{$9$};
\node[state,ball color=yellow!70,shading=ball](J)[below=of I]{$10$};
\node[state](L)[right=of A,xshift=4cm]{$1$};
\node[state](M)[below=of L]{$2$};
\node[state](N)[below=of M]{$3$};
\node[state](O)[below=of N]{$4$};
\node[state](P)[below=of O]{$5$};
\node[state](Q)[below=of P]{$6$};
\node[state](R)[below=of Q]{$7$};
\node[state](S)[below=of R]{$8$};
\node[state](T)[below=of S]{$9$};
\node[state](U)[below=of T]{$10$};
\node[state](V)[below=of U]{$11$};
\node[state](W)[below=of V]{$12$};
\node[state](X)[below=of W]{$13$};
\node[state](Y)[below=of X]{$14$};
\node[state](Z)[below=of Y]{$15$};
\path (A) edge (L);
\path (A) edge (P);
\path (A) edge (Q);
\path (C) edge (L);
\path (C) edge (M);
\path (C) edge (R);
\path (D) edge (M);
\path (D) edge (N);
\path (D) edge (S);
\path (E) edge (N);
\path (E) edge (O);
\path (E) edge (T);
\path (B) edge (O);
\path (B) edge (P);
\path (B) edge (U);
\path (F) edge (Q);
\path (F) edge (V);
\path (F) edge (Z);
\path (G) edge (R);
\path (G) edge (V);
\path (G) edge (W);
\path (H) edge (S);
\path (H) edge (W);
\path (H) edge (X);
\path (I) edge (T);
\path (I) edge (X);
\path (I) edge (Y);
\path (J) edge (U);
\path (J) edge (Y);
\path (J) edge (Z);
\end{tikzpicture}
\end{center}
\end{minipage}
\begin{minipage}{.49\textwidth}
\begin{center}
\begin{tikzpicture}[-,>=stealth',auto,node distance=2cm,
	on grid,semithick, every state/.style={ball color=red,shading=ball,draw=none,text=black,inner sep=0pt}]
	\scriptsize
\node[state,ball color=blue!30,shading=ball](A){$1$};
\node[state,ball color=blue!30,shading=ball](C)[below=of A, xshift=2.5cm, yshift=0.4cm]{$2$};
\node[state,ball color=blue!30,shading=ball](D)[below=of C, xshift=-0.8cm, yshift=-0.4cm]{$3$};
\node[state,ball color=blue!30,shading=ball](B)[below=of A, xshift=-2.5cm, yshift=0.4cm]{$5$};
\node[state,ball color=blue!30,shading=ball](E)[below=of B, xshift=0.8cm,yshift=-0.4cm ]{$4$};
\node[state,ball color=blue!30,shading=ball](F)[below=of A,yshift=1cm]{$6$};
\node[state,ball color=blue!30,shading=ball](G)[below=of A, xshift=1.5cm, yshift=0.3cm]{$7$};
\node[state,ball color=blue!30,shading=ball](H)[below=of C, xshift=-1.4cm, yshift=0.6cm]{$8$};
\node[state,ball color=blue!30,shading=ball](I)[below=of A, xshift=-1.5cm, yshift=0.3cm]{$10$};
\node[state,ball color=blue!30,shading=ball](L)[below=of B, xshift=1.4cm,yshift=0.6cm ]{$9$};
\path (A) edge[bend left] (C);
\path (A) edge[bend right] (B);
\path (C) edge[bend left] (D);
\path (D) edge[bend left] (E);
\path (E) edge[bend left] (B);
\path (F) edge[bend left] (G);
\path (F) edge[bend right] (I);
\path (G) edge[bend left] (H);
\path (H) edge[bend left] (L);
\path (L) edge[bend left] (I);
\path (A) edge (F);
\path (C) edge (G);
\path (D) edge (H);
\path (B) edge (I);
\path (E) edge (L);
\end{tikzpicture}
\end{center}
\end{minipage}
\caption{The bipartite graph associated with the data matrix $E$ in
\eqref{eq:adjGPG} for $n=5$ and $k=1$ (left), which leads to the generalized Petersen graph $GPG(5,1)$
(right).}
\label{fig:GPG}
\end{figure}

We consider the data matrix represented by the graph in Figure~\ref{fig:GPG}
(left) for $n=5$ whose similarity matrix can be seen as the adjacency matrix of
the generalized Petersen graph $GPG(n,k)$ with a skip $k=1$; see
Figure~\ref{fig:GPG} (right). In this particular case, also the inner subgraph
is a cycle graph and the incidence matrix has the block structure
\begin{equation}\label{inc_GPG}
\widetilde{E} = \begin{bmatrix}
E^T & I_n & \bm{0}_{n} \\
\bm{0}_n & I_n & E^T
\end{bmatrix} \in \R^{2n \times 3n},
\end{equation}
where $E \in \R^{n \times n}$ is the incidence matrix of the cycle defined in~\eqref{eq:incidCycle}.
Its similarity matrix and Laplacian are, respectively
\begin{equation}\label{laplPG}
S = \begin{bmatrix}
F & I_n \\
I_n & F \end{bmatrix} \qquad
L = \begin{bmatrix}
\widetilde{F} & -I_n\\
-I_n & \widetilde{F}
\end{bmatrix}
\end{equation}
where $F$ and $\widetilde{F}$ are $n\times n$ circulant matrices given respectively by
\begin{equation}\label{matrixF}
F = \circu(3,1,\underbrace{0,\ldots,0}_{n-3},1)
\qquad 
\widetilde{F} = \circu(3,-1,\underbrace{0,\ldots,0}_{n-3},-1).
\end{equation}

\begin{theorem}\label{theo:FiedGPG}
Let $\widetilde{E}$ be the $2n\times 3n$ data matrix~\eqref{inc_GPG} of the
generalized Petersen graph $GPG(n,1)$. 
Then, the Fiedler value of the Laplacian matrix $L$ has multiplicity 2.
\end{theorem}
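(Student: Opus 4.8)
The plan is to exploit the block-circulant structure of the Laplacian \eqref{laplPG}. Both $\widetilde{F}$ and $I_n$ are circulant matrices of order $n$, so they are simultaneously diagonalized by the Fourier basis \eqref{eigenvcirc}. I would therefore look for eigenvectors of $L$ of the form $[a\,\bm{v}_j^T,\,b\,\bm{v}_j^T]^T$, where $\bm{v}_j$ is the $j$th Fourier vector and $(a,b)\in\C^2$. Denoting by $\mu_j$ the eigenvalue of $\widetilde{F}$ associated with $\bm{v}_j$, the block structure of $L$ turns the eigenvalue equation into the $2\times 2$ problem
\begin{equation*}
\begin{bmatrix} \mu_j & -1 \\ -1 & \mu_j \end{bmatrix}
\begin{bmatrix} a \\ b \end{bmatrix}
= \lambda \begin{bmatrix} a \\ b \end{bmatrix},
\end{equation*}
whose solutions are $\lambda = \mu_j - 1$ (with $(a,b)\propto(1,1)$) and $\lambda = \mu_j + 1$ (with $(a,b)\propto(1,-1)$). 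As $j$ ranges over $1,\dots,n$ these mutually orthogonal two-dimensional invariant subspaces exhaust $\C^{2n}$, hence the $2n$ eigenvalues of $L$ are exactly $\mu_j\pm 1$.

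The next step is to make $\mu_j$ explicit. From \eqref{matrixF} the first column of $\widetilde{F}$ is $(3,-1,0,\dots,0,-1)^T$, so by \eqref{spectr}--\eqref{lapl}, and using $\omega^{-(n-1)}=\omega$,
\begin{equation*}
\mu_j = 3 - \omega^{-(j-1)} - \omega^{\,(j-1)} = 3 - 2\cos\theta_j, \qquad \theta_j = \frac{2\pi(j-1)}{n}.
\end{equation*}
Consequently the spectrum of $L$ splits into the two families $2-2\cos\theta_j$ (the ``minus'' branch) and $4-2\cos\theta_j$ (the ``plus'' branch), $j=1,\dots,n$. The minus branch attains its minimum $0$ only at $j=1$, recovering the simple zero eigenvalue with eigenvector $\bm{e}$; its second smallest value is $2-2\cos(2\pi/n)$, attained for $j=2$ and $j=n$ since $\cos\theta_2=\cos\theta_n$. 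The plus branch is bounded below by $4-2\cos 0 = 2$.

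The heart of the argument, and the place where the hypothesis $n\geq 5$ enters, is to certify that the Fiedler value is precisely $2-2\cos(2\pi/n)$ with multiplicity exactly $2$. For this I would compare the two branches: since $n>4$ gives $\cos(2\pi/n)>0$, one has $2-2\cos(2\pi/n)<2$, so the entire plus branch lies strictly above the candidate value and cannot contribute to its multiplicity. Within the minus branch, $2-2\cos\theta_j=2-2\cos(2\pi/n)$ forces $\cos\theta_j=\cos(2\pi/n)$, i.e.\ $\theta_j\equiv\pm 2\pi/n\pmod{2\pi}$, which selects exactly $j=2$ and $j=n$; the same positivity rules out any further coincidence, because $\cos\theta_j-\cos(2\pi/n)<1$ prevents a plus-branch value from dropping to the candidate. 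Thus the smallest nonzero eigenvalue of $L$ equals $2-2\cos(2\pi/n)$ and has multiplicity $2$, which is exactly the claim. The only genuinely delicate point is the branch comparison: once $\cos(2\pi/n)>0$ is in hand, the rest is bookkeeping on the cosine values.
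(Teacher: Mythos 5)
Your proof is correct and takes essentially the same route as the paper: the paper likewise diagonalizes the block-circulant Laplacian via the decomposition $L = I_2\otimes\widetilde{F} + \circu(0,1)\otimes(-I_n)$, splits the spectrum into the two branches $\sigma_j \mp 1$ with $\sigma_j$ the eigenvalues of the symmetric circulant $\widetilde{F}$, and concludes from the eigenvalue coupling of Theorem~\ref{theo:cycle}. Your explicit cosine formulas and the verification that the plus branch $4-2\cos\theta_j\geq 2$ lies strictly above $2-2\cos(2\pi/n)$ precisely when $n\geq 5$ actually spell out a branch-comparison step (which fails for $n=4$, where the multiplicity would be $3$) that the paper's proof leaves implicit, so your write-up is, if anything, more complete than the original.
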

\begin{proof}
$L$ is a block circulant matrix with circulant blocks $\widetilde{F}$ and $-I_n$.
A block circulant matrix can be expressed as the sum of Kronecker products.
In our case, we have
$$
L = P_1 \otimes \widetilde{F} + P_2 \otimes (-I_n),
$$
where $P_1 = I_2$ and $P_2 = \circu(0,1)$.
More in general, one has $P_i = \circu(\bm{e}_i)$,
with $\bm{e}_i$ the $i$th canonical basis vector.

If we define the matrix-valued function 
$$
H(x) = x^0 \otimes \widetilde{F} + x^1 \otimes (-I_n),
$$
so that $H(P_2) = L$, it can be shown (see~\cite{kaveh2011}) that the spectrum
of $L$ is the union of the spectra of $H(\lambda_1)$ and $H(\lambda_2)$, being
$\lambda_1$ and $\lambda_2$ the eigenvalues of $P_2$.
Moreover, the eigenvectors of $L$ are given by the Kronecker products
$\bm{v}_i\otimes\bm{u}_j$, $i,j = 1,2$, where $v_i$ are the eigenvectors of
$P_2$ and $u_i$ are the eigenvectors of both $H(\lambda_1)$ and $H(\lambda_2)$. 

In our case, $\lambda_1 = 1$ and $\lambda_2 = -1$, so that
$H(\lambda_1) = \widetilde{F} - I_n$ and $H(\lambda_2) = \widetilde{F} + I_n$.
An immediate result is that the eigenvalues of $L$ are given by
$$
\mu_i = \begin{cases}
\sigma_i - 1 &  \quad \text{if} \quad i = 1, \dots, n \\
\sigma_{i-n} + 1 &  \quad \text{if} \quad i = n+1, \dots, 2n \\
\end{cases},
$$
where $\sigma_i$, $i = 1, \dots, n$, are the eigenvalues of the matrix $\widetilde{F}$. 
Since $\widetilde{F}$ is symmetric circulant, its eigenvalues are coupled (see
Theorem~\ref{theo:cycle}) and this completes the proof. 
\end{proof}

\smallskip
\begin{corollary}\label{corpet}
Let $\sigma$ be the second smallest eigenvalue of the matrix
$F$~\eqref{matrixF} and $\{\bm{w}_1, \bm{w}_2\}$ be a basis for the eigenspace
corresponding to $\sigma$. Then, $\sigma-1$ is the Fiedler value of the
Laplacian matrix $L$ given in~\eqref{laplPG} and $\{\bm{v}_1, \bm{v}_2\}$ is a
basis for the associated eigenspace, where
\begin{equation}\label{eig:GPG}
\bm{v}_1  = \begin{bmatrix} 1 \\ 1 \end{bmatrix} \otimes \bm{w}_1 
= \begin{bmatrix} \bm{w}_1 \\ \bm{w}_1 \end{bmatrix} 
\quad \text{and} \quad 
\bm{v}_2  = \begin{bmatrix} 1 \\ 1 \end{bmatrix} \otimes \bm{w}_2 
= \begin{bmatrix} \bm{w}_2 \\ \bm{w}_2 \end{bmatrix}.
\end{equation}
\end{corollary}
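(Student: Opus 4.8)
The plan is to read the corollary straight off the spectral decomposition established in Theorem~\ref{theo:FiedGPG}. That theorem exhibits the eigenvalues of $L$ as the numbers $\sigma_i-1$ and $\sigma_i+1$, $i=1,\dots,n$, where $\sigma_1,\dots,\sigma_n$ are the eigenvalues of the circulant block $\widetilde{F}$, and it gives the matching eigenvectors: $\begin{bmatrix}1\\1\end{bmatrix}\otimes\bm{u}$ for the value $\sigma_i-1$ and $\begin{bmatrix}1\\-1\end{bmatrix}\otimes\bm{u}$ for $\sigma_i+1$, with $\bm{u}$ the associated eigenvector of $\widetilde{F}$. Thus the spectrum of $L$ splits into a ``symmetric'' branch $\{\sigma_i-1\}$ and an ``antisymmetric'' branch $\{\sigma_i+1\}$, and locating the Fiedler value of $L$ reduces to locating the two smallest entries in the union of these branches.

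Next I would analyze the spectrum of $\widetilde{F}$. Writing $\widetilde{F}=I_n+\circu(2,-1,0,\dots,0,-1)$, where the circulant summand is precisely the Laplacian of the cycle $\cC_n$ of Section~\ref{subsec:cycle}, the eigenvalues of $\widetilde{F}$ are one plus those of that cycle Laplacian and the eigenvectors coincide. Hence the smallest eigenvalue of $\widetilde{F}$ is $\sigma_1=1$, attained at $\bm{e}_n$, while by Theorem~\ref{theo:cycle} its second smallest eigenvalue $\sigma=3-2\cos(2\pi/n)$ is double, with eigenspace spanned by the two frequency-one real Fourier modes $\bm{w}_1,\bm{w}_2$ of \eqref{eig_C}. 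On the $L$ side, $\sigma_1-1=0$ is the null eigenvalue with eigenvector $\begin{bmatrix}1\\1\end{bmatrix}\otimes\bm{e}_n=\bm{e}_{2n}$, whereas the next value of the symmetric branch is $\sigma-1$, carried by the two vectors $\bm{v}_1,\bm{v}_2$ of \eqref{eig:GPG}; note that $\bm{w}_i\perp\bm{e}_n$ forces $\bm{v}_i\perp\bm{e}_{2n}$, as is required of Fiedler vectors.

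The crux, and the step I expect to be the main obstacle, is to certify that $\sigma-1$ is genuinely the second smallest eigenvalue of $L$ over \emph{both} branches, not merely within the symmetric one. The only potential competitor below it is the smallest antisymmetric value $\sigma_1+1=2$, since every other symmetric value is $\sigma_i-1$ with $\sigma_i$ strictly larger than $\sigma$. It therefore suffices to verify the single inequality $\sigma-1<2$, equivalently $\sigma<3$, equivalently $\cos(2\pi/n)>0$. As $GPG(n,1)$ is defined only for $n\geq 5$, we have $2\pi/n\leq 2\pi/5<\pi/2$, so $\cos(2\pi/n)>0$ holds strictly and no antisymmetric eigenvalue intrudes.

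Finally I would assemble the pieces. Since $\sigma-1>0$ is the smallest positive eigenvalue of $L$, it is the Fiedler value; its multiplicity is exactly two, because $\sigma$ is a double eigenvalue of $\widetilde{F}$ contributing precisely the pair $\bm{v}_1,\bm{v}_2$, while the third smallest eigenvalue of $\widetilde{F}$, namely $3-2\cos(4\pi/n)$, is strictly larger than $\sigma$ for $n\geq 5$ and the entire antisymmetric branch lies at or above $2>\sigma-1$. This produces exactly the eigenspace basis \eqref{eig:GPG} and completes the proof.
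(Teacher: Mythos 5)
Your proof is correct and follows the same route as the paper: both read the corollary off the Kronecker decomposition established in Theorem~\ref{theo:FiedGPG}, pairing the eigenvalue branch $\sigma_i-1$ with the eigenvector $(1,1)^T$ of $P_2$ (the paper's proof is literally one sentence noting this pairing). The difference is completeness: the paper leaves implicit precisely the step you identify as the crux, namely that no eigenvalue of the antisymmetric branch $\{\sigma_i+1\}$, carried by eigenvectors $(1,-1)^T\otimes\bm{u}$, slips below $\sigma-1$. Your verification --- the only competitor is $\sigma_1+1=2$, and $\sigma-1=2-2\cos(2\pi/n)<2$ since $\cos(2\pi/n)>0$ for $n\geq 5$ --- supplies exactly what the paper glosses over, and it shows where the hypothesis $n\geq 5$ is actually used (for $n=4$ one would have $\sigma-1=2=\sigma_1+1$ and the multiplicity claim would fail). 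Likewise, your observation that $\widetilde{F}$ is $I_n$ plus the Laplacian of the cycle $\cC_n$ makes $\sigma=3-2\cos(2\pi/n)$ and the eigenspace $\{\bm{w}_1,\bm{w}_2\}$ explicit, where the paper only cites Theorem~\ref{theo:cycle}. One further point in your favor: the corollary as stated refers to the matrix $F$ of \eqref{matrixF}, but both your argument and the proof of Theorem~\ref{theo:FiedGPG} correctly work with $\widetilde{F}$; for $n$ odd the second smallest eigenvalue of $F$ is not $\sigma$ (e.g., for $n=5$ it is $3+2\cos(4\pi/5)$, and $\sigma-1$ computed from $F$ would not be the Fiedler value), so your silent substitution of $\widetilde{F}$ for $F$ is the right reading of the statement and quietly repairs a typo in it.
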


\begin{proof}
The proof follows from Theorem~\ref{theo:FiedGPG}, noting that $(1, 1)^T$ is
the eigenvector of $P_2$ associated to the eigenvalue $\lambda_1 = 1$.
\end{proof}

Since the eigenvectors of the matrix $\widetilde{F}$ are the columns of the normalized
Fourier matrix, we can obtain the set of admissible permutations from the
results obtained for the cycle graph. 
Indeed, the vectors $\bm{v}_1$ and $\bm{v}_2$ defined in~\eqref{eig:GPG} have
the same entries as the vectors $\bm{w}_1$ and $\bm{w}_2$ in~\eqref{eig_C}, but
each entry is doubled. This means that the components of a vector $\bm{x}$ in
the \emph{Fiedler plane} come in pairs.
Consequently, the number of the admissible permutations for a generalized
Petersen graph $GPG(n,1)$ is $2^n$ times the admissible permutations obtained
for a cycle graph.

For $n=4,5,6,7$, we expect at least 128, 480, 1920, and 6272 permutations,
respectively. Other admissible permutations may appear in case other
equalities occur between the entries of $\bm{v}_1$ and those of $\bm{v}_2$.
Since the graph has $2n$ nodes, formula \eqref{wrong} forecasts in this case
4320, 241920, $2.18\cdot 10^7$, and $2.87\cdot 10^9$ solutions, respectively.

\section{Two numerical methods to determine admissible permutations}\label{sec:methods}

A possible approach to find the admissible permutations associated to a Fiedler
vector in the presence of a multiple Fiedler value is to employ a randomized
algorithm. 

To this end, we developed a simple Monte Carlo approach.
In the case of a double Fiedler value, we considered $N$ random vectors in
$\R^2$ and used their components as coefficients of linear combinations of an
orthonormal basis for the corresponding eigenspace; see \eqref{Fiedvec}.
This procedure generates a set of random vectors belonging to a plane immersed
in $\R^n$, which can all be considered as legitimate ``Fiedler vectors''.
Each vector is then sorted and the corresponding permutations of indexes are
stored in the columns of a matrix. After removing all the repeated permutations
and the swapped ones, we obtain a set of allowed permutations of the $n$ nodes
in the considered graph.

The advantages of this approach are an easy implementation and its immediate
generalization to the case of a Fiedler value with multiplicity larger than 2.
The drawbacks are a large computational cost and the fact that this method
is not able to identify permutations deriving from specific values of the
coefficients of the linear combination; see for example the permutations
produced by the Fiedler vectors \eqref{sortvecs} for the modified star graph.
This aspects will be investigated in the numerical examples of
Section~\ref{sec:numexp}, where we will apply this numerical method and the
following one to the case studies considered in Section~\ref{sec:casestudies}.

In order to compute all the admissible permutations in the particular case of a
Fielder value with multiplicity 2, we developed a \emph{graphical method}
which is briefly described below.

Let the Laplacian matrix $L$ of a graph with $n$ nodes have a double Fiedler
value $\lambda_2$, and let
$$
\mathbf{v}=(v_1,v_2,\ldots,v_n)^T\quad \text{and} \quad
\mathbf{w}=(w_1,w_2,\ldots,w_n)^T
$$
be an orthogonal basis for the corresponding eigenspace $\mathcal{F}$ of
dimension 2.
The idea behind the method, described in Algorithm~\ref{alg:graphic}, is
considering the vector function
$$
\bm{f}(\gamma) = \bm{v}+\gamma \bm{w} 
= (f_1(\gamma),\ldots,f_n(\gamma))^T, 
$$
and represent its components $f_i(\gamma)=v_i+\gamma w_i$, $i=1,\dots,n$, as
straight lines in the Euclidean plane; see Figure~\ref{fig:cycle_lines}.

Computing the intersections of these lines (see line \ref{line9}) identifies
intervals in which the relative ordering of the components of $\bm{f}(\gamma)$
changes.
The position of the lines before the first intersection point (line
\ref{line20}) gives the reordering of the components of the linear combination
of $\mathbf{v}$ and $\mathbf{w}$ which corresponds to the first admissible
permutation of the nodes.
Then, new permutations are obtained by reordering the values of $f(\gamma)$ at
each intersection point and in the center point of each interval.
Indeed, an intersection point corresponds to a
swap of the components in the Fiedler vector, as $\gamma$ increases, and so to
a new permutation of the nodes.

The performances of the two procedures are analyzed and compared in the
numerical examples illustrated in the following section.

\begin{algorithm}[!ht]
\begin{algo}
\STATE \textbf{Requires:} Fiedler vectors $\bm{v},\bm{w}\in\R^n$ and
tolerance $\tau$
\STATE \textbf{Ensure:} matrix $P$ containing admissible node reorderings
\STATE $f(\gamma) = \bm{v}+\gamma\bm{w}$ 
\STATE $\Phi$ (2 columns matrix, initially empty, for intersections and their
	multiplicity)
\STATE $m=0$ (number of intersections found)
\FOR $i=1,\dots,n-1$
	\FOR $j=i+1,\dots,n$
		\IF {$|w_i-w_j|>\tau$}
			\STATE $\gamma_\text{int}=(v_i-v_j)/(w_j-w_i)$
				(new intersection abscissa) \label{line9}
			\STATE let $r\in\{1,\ldots,m\}$ such that 
				$|\gamma_\text{int}-\Phi_{r,1}|<\tau$,
				otherwise $r=0$
			\IF $r=0$ ($\gamma_\text{int}$ is not in $\Phi$)
				\STATE $m=m+1$, 
					$\Phi_{m,1}=\gamma_\text{int}$,
					$\Phi_{m,2}=1$ (add new intersection)
			\ELSE $\Phi_{r,2}=\Phi_{r,2}+1$ (increment multiplicity)
			\ENDIF
		\ENDIF
	\ENDFOR
\ENDFOR 
\STATE sort rows of $\Phi$ so that intersections are in increasing order
\STATE store in $P$ the permutations corresponding 
	to the possible orderings of $\bm{w}$
\STATE $\bm{y}_1 = f(\Phi_{1,1}-1)$
	(values of the lines in the first interval) \label{line20}
\STATE add to $P$ the permutations corresponding 
	to the possible orderings of $\bm{y}_1$
\FOR $i=1,\dots,m-1$
	\STATE $\bm{y}_1 = f(\Phi_{i,1})$ (left endpoint of $i$th interval)
		\label{line1}
	\STATE $\bm{y}_2 = f((\Phi_{i,1}+\Phi_{i+1,1})/2)$ 
		(center point of $i$th interval)
	\STATE add to $P$ the permutations corresponding to 
		the orderings of $\bm{y}_1$ and $\bm{y}_2$
\ENDFOR
\STATE $\bm{y}_1 = f(\Phi_{m,1})$ (last intersection) \label{line2}
\STATE $\bm{y}_2 = f(\Phi_{m,1}+1)$ (last interval)
\STATE add to $P$ the permutations corresponding to 
	the orderings of $\bm{y}_1$ and $\bm{y}_2$
\STATE remove from $P$ repeated or reversed permutations
\end{algo}
\caption{Graphic method for determining the admissible reorderings of the nodes
in a graph with a double Fiedler value}
\label{alg:graphic}
\end{algorithm}

\begin{figure}
\begin{center}
\includegraphics[width=.8\textwidth]{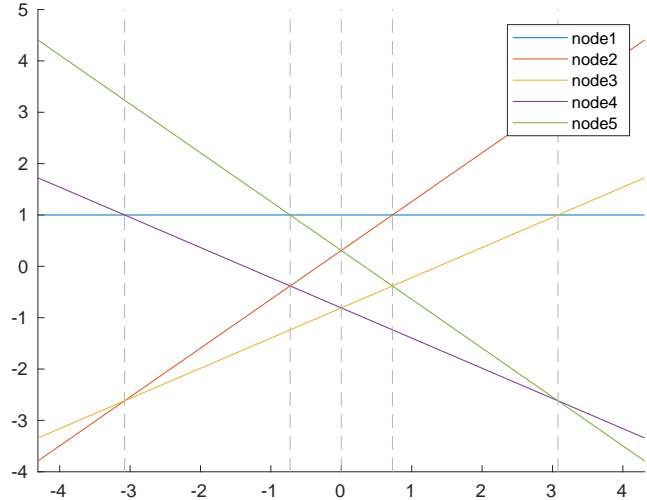}
\caption{Lines corresponding to the nodes in the cycle graph $C_n$ with $n=5$ nodes.}
\label{fig:cycle_lines}
\end{center}
\end{figure}

To illustrate the functioning of the graphical method, we consider
the cycle graph with $n=5$ nodes, depicted in Figure~\ref{fig:cycle}.
As pointed out in Section~\ref{subsec:cycle}, the admissible permutations
are $15$.
They can be obtained through the graphical method by
considering the swap of the indexes corresponding to the lines which intersect.
More precisely, in Figure~\ref{fig:cycle_lines} we report the lines
representing the functions $f_i(x)=v_i+x w_i$, for $i = 1, \ldots,5$, each one
corresponding to the node identified by the $i-$th component of the linear
combination of the Fiedler vectors $\mathbf{v}$ and $\mathbf{w}$. Due to
the fact that any vector in the eigenspace corresponding to the Fiedler values
can be expressed as in \eqref{eq:cycleFv}, there are intersection points with
the same abscissa highlighted by vertical dashed lines. 
As explained above, the first admissible permutation is obtained by considering
the position of the lines before the first intersection points and therefore it
is given by $(\ttt{5 4 1 3 2})$. The first vertical dashed line points out that
there are two pair of lines that intersect and consequently, the second set of
allowed permutations is obtained from the first one by considering the two
macro-nodes $(\ttt{1,4})$ and $(\ttt{2,3})$, that is, swapping the indexes
corresponding
to the lines that represent nodes 1 and 4 and nodes 2 and 3. Hence, the
additional permutations are given by
$$
(\ttt{5 1 4 3 2}), \qquad (\ttt{5 1 4 2 3}), \qquad (\ttt{5 4 1 2 3}).
$$ 
After the first intersection, the position of the lines gives the permutation
of the nodes $(\ttt{5 1 4 2 3})$, which has already been considered. The second
vertical dashed line, corresponding to the second intersection point, reveals
that two pair of lines intersect, i.e., we need to consider two macro-nodes,
namely $(\ttt{1,5})$ and $(\ttt{2,4})$. The new admissible permutations are
then
$$
(\ttt{5 1 2 4 3}), \qquad (\ttt{1 5 2 4 3}), \qquad (\ttt{1 5 4 2 3}).
$$ 
After the second intersection, the lines follow the order $(\ttt{1 5 2 4 3})$,
that is contained in the previous set. In correspondence to the third
intersection we have two pairs of lines which intersect, i.e., the Fielder
vectors have the two macro-nodes $(\ttt{2,5})$ and $(\ttt{3,4})$. In this case,
the encoded permutations are
$$
(\ttt{1 5 2 3 4}), \qquad (\ttt{1 2 5 4 3}), \qquad (\ttt{1 2 5 3 4}).
$$ 
After this intersection the permutation is $(\ttt{1 2 5 3 4})$, which has been
already taken into account. 
Considering the fourth vertical dashed line, which highlights that lines 
1-2 and 3-5 intersect, one obtains the admissible permutations 
$$
(\ttt{1 2 3 5 4}), \qquad (\ttt{2 1 5 3 4}), \qquad (\ttt{2 1 3 5 4}).
$$
After the fourth
intersection point, the position of the lines gives the permutation $(\ttt{2 1
3 5 4})$, already present in our set of permutations. 
The last intersection yields that lines 1-3 and 4-5 intersect, leading to
the further permutations 
$$
(\ttt{2 3 1 5 4}), \qquad (\ttt{2 1 3 4 5}), \qquad (\ttt{2 3 1 4 5}).
$$
The permutation $(\ttt{2 3 1 4 5})$, found in the last interval and coincident
with the last one of the previous set, coincides with the reverse of the first
one. Removing it leaves 15 admissible permutations of the indexes, which we
report as columns of the following matrix
$$
\begin{bmatrix}
5 & 5 & 5 & 5 & 5 & 1 & 1 & 1 & 1 & 1 & 1 & 2 & 2 & 2 & 2\\
4 & 1 & 1 & 4 & 1 & 5 & 5 & 5 & 2 & 2 & 2 & 1 & 1 & 3 & 1\\
1 & 4 & 4 & 1 & 2 & 2 & 4 & 2 & 5 & 5 & 3 & 5 & 3 & 1 & 3\\
3 & 3 & 2 & 2 & 4 & 4 & 2 & 3 & 4 & 3 & 5 & 3 & 5 & 5 & 4\\
2 & 2 & 3 & 3 & 3 & 3 & 3 & 4 & 3 & 4 & 4 & 4 & 4 & 4 & 5\\
\end{bmatrix}.
$$

\section{Numerical experiments}\label{sec:numexp}

In this section we report the results produced by the two methods introduced in
Section~\ref{sec:methods} for determining the admissible permutations of a set
of units, in the case the Fiedler value of the associated graph has
multiplicity 2.
To verify the performance of the methods, the graphical (see
Algorithm~\ref{alg:graphic}) and the Monte Carlo methods have been implemented
in Matlab R2021a and applied to the three case studies described in
Section~\ref{sec:casestudies}. The numerical experiments were performed on an
Intel Xeon Gold 6136 computer (16 cores, 32 threads) equipped with 128 GB RAM,
running the Linux operating system. 


The first computed example consists of finding the admissible permutations of
the nodes of a modified star graph $\widehat{\mathcal{S}}_n$ with data
matrix~\eqref{eq:mod_star}. As stated in Corollary~\ref{theo:star2bis}, the
Laplacian of the similarity matrix associated to the graph has a double Fiedler
value equal to 1. Since an orthogonal basis for the eigenspace $\mathcal{F}$
corresponding to the Fiedler value is known, every $\bm{x}\in\mathcal{F}$ can
be expressed by $\bm{x}=Q_2\bm{y}$, with $\bm{y}=[\alpha,\beta]^T$, as
in~\eqref{eigvecStar}.
As explained in detail in Section~\ref{sec:stargraph}, the permutations of
the nodes that yield a solution to the seriation problem are given by all the
possible reorderings of the entries of $\bm{x}$.

\begin{table}
\centering
\begin{tabular}{c|c|cc|cc}
& & \multicolumn{2}{c|}{Graphical method} & \multicolumn{2}{c}{Monte Carlo method}\\ 
n & $3(n-2)!$ & found perms & time & found perms & time \\ \hline
5 & 18 & 18 & 1.17e-01 & 14 & 1.27e-01 \\
6 & 72 & 72 & 1.57e-02 & 48 & 8.19e-02 \\
7 & 360 & 360 & 1.63e-02 & 216 & 2.58e-01 \\
8 & 2160 & 2160 & 9.02e-02 & 1200 & 5.22e+00 \\
9 & 15120 & 15120 & 1.11e+00 & 7920 & 8.80e+01 \\
10 & 120960 & 120960 & 2.21e+01 & 60480 & 1.76e+03 \\
\end{tabular}
\caption{Results obtained by applying the graphical and the Monte Carlo methods to the modified star graph with data matrix $E$~\eqref{eq:mod_star}.}\label{tab:modStar}
\end{table}

The results of the experiments concerning the application of the graphical and
the Monte Carlo methods to a graph $\widehat{\mathcal{S}}_n$ with a number of
nodes $n$ ranging from 5 to 10 are displayed in Table~\ref{tab:modStar}.
In particular, the second column contains the number $3(n-2)!$ of admissible
permutations for a modified star graph stated in~\eqref{right_star}.
It coincides with the number of admissible permutations found by the graphical
method, reported in the third column of the table.
We note that such number is one half of the estimate furnished by Equation
\eqref{wrong}, for $k=2$. For the following examples, the reduction with
respect to this estimate is even larger.

As the fifth column shows, the Monte Carlo method fails to
identify all the permutations, after considering $N=1000$ random linear
combinations of the orthonormal basis for the eigenspace $\mathcal{F}$.
We verified that increasing the value of $N$ up to 5000 the
performance of the method does not improve.
In this test, the graphical algorithm is, for every $n$, much faster than the
Monte Carlo method, as one can observe comparing the computing time in seconds
reported in the fourth and sixth columns of Table~\ref{tab:modStar}.

We remark that the failure of the Monte Carlo approach is due to the fact that
many admissible permutations result from specific values of the coefficients
$\alpha$ and $\beta$ in the linear combination~\eqref{eigvecStar}; see,
e.g.,~\eqref{sortvecs}. Assuming such values is an event with zero probability
in a random draw of real numbers, so it is very unlikely to occur in the
algorithm.
On the contrary, the graphical method explicitly considers equal components in
the Fiedler vectors when it processes intersections between the
lines; see lines \ref{line1} and \ref{line2} of Algorithm~\ref{alg:graphic}.


A similar comparison between the two methods has also been considered for the
cycle graph $\mathcal{C}_n$ analyzed in Section~\ref{subsec:cycle}.
The results are displayed in Table~\ref{tab:cycle}.
In this case, every vector $\bm{x}$ in the eigenspace associated with the
double Fiedler value of $\mathcal{C}_n$ can be represented as in
Equation~\eqref{eq:cycleFv}. 
In Section~\ref{subsec:cycle}, we have not been able to foresee the number of
admissible permutations for this graph, but the result we found for $n=4,5,6,7$
are confirmed by the outcome of the graphical method; see the second column in
Table~\ref{tab:cycle}. Again, the graphical method proves to be the fastest one
and the Monte Carlo method fails in recovering all the admissible permutations.
The reason for this failure is the same as discussed above.

\begin{table}
\centering
\begin{tabular}{c|cc|cc}
& \multicolumn{2}{|c|}{Graphical method} & \multicolumn{2}{|c}{Monte Carlo method}\\ 
n & found perms & time & found perms & time \\ \hline
4 & 8 & 1.53e-01 & 4 & 1.61e-01 \\
5 & 15 & 1.57e-01 & 7 & 4.87e-02 \\
6 & 30 & 1.48e-02 & 14 & 6.77e-02 \\
7 & 49 & 4.03e-03 & 13 & 6.13e-02 \\
8 & 88 & 4.90e-03 & 20 & 7.52e-02 \\
9 & 135 & 1.33e-02 & 23 & 7.68e-02 \\
10 & 230 & 5.25e-03 & 54 & 8.10e-02 \\
\end{tabular}
\caption{Results obtained by applying the graphical and the Monte Carlo methods
to the cycle graph with data matrix
$E$~\eqref{eq:incidCycle}.}\label{tab:cycle}
\end{table}


The results displayed in Table~\ref{tab:petersen} are obtained by applying the
two methods to the generalized Petersen graph $GPG(n,1)$. 
As discussed in Section~\ref{subs:petersen}, both the outer and the inner
subgraphs in $GPG(n,1)$ are cycle graphs and the total number of nodes is $2n$.
By following the discussion regarding the cycle graph and the results contained
in Theorem~\ref{theo:FiedGPG} and Corollary~\ref{corpet} it follows that
each vector $\bm{x}$ in the eigenspace corresponding to the Fiedler value has
$n$ macronodes of size two. Then, keeping into account the number of
permutations for a cycle, the admissible permutations of the nodes in
$GPG(n,1)$ are at least $2^n n$.

\begin{table}
\centering
\begin{tabular}{c|c|cc|cc}
& & \multicolumn{2}{|c|}{Graphical method} & \multicolumn{2}{|c}{Monte Carlo method}\\ 
n & $2^n n$ & found perms & time & found perms & time \\ \hline
5 & 160 & 5600 & 2.57e-01 & 160 & 1.61e+00 \\
6 & 384 & 48000 & 7.44e-01 & 384 & 1.38e+01 \\
7 & 896 & 192640 & 1.83e+01 & 896 & 3.99e+01 \\
8 & 2048 & 1546240 & 4.17e+02 & 2048 & 9.77e+01 \\
9 & 4608 & 5967360 & 3.10e+04 & 4608 & 2.38e+02 \\
\end{tabular}
\caption{Results obtained by applying the graphical and the Monte Carlo methods
to the Generalized Petersen graph with data matrix
$E$~\eqref{eq:adjGPG}.}\label{tab:petersen}
\end{table}

The second column of Table~\ref{tab:petersen} reports this minimum value for
the admissible permutations.
It is remarkable to observe that this is exactly the number of permutations
recovered by the Monte Carlo method.
Anyway, the real number of admissible permutations is much larger than that, as
testified by the results of the graphical method in the third column of the
table. This huge number of permutations requires a large computing time,
making the graphical method extremely slower than in the other examples.
Nevertheless, it is effective when computing the complete solution of the problem, while
the randomized approach it is not, even if in this case $N=5000$ random
Fiedler vectors have been used.

We analyzed the performance of both methods by means of the ``profiler''
available in Matlab. It turns out that the bottleneck for the execution time of
the algorithms are the tests for verifying that a new permutation does not
appear in the list of those already computed either in direct or reverse
ordering. When the number of admissible permutations is not too large, this
does not significantly affect the complexity of the graphical method, while it
does in the case of the generalized Petersen graph.

\section{Conclusions}\label{sec:last}
In this paper we studied the possible orderings of the Fiedler vector of a
graph, under the assumption that the Fiedler value has multiplicity larger than
one. The determination of such ordering is related to the solution of the
seriation problem.
We showed that, in the special case of a double Fiedler value, the number of
admissible permutations is smaller than the maximum number of permutations
allowed. In fact, it depends on the structure of the underlying bipartite
graph.
We examined three case studies for which it is possible to draw conclusions
about the solution of the problem, and we proposed a graphical method and a
randomized algorithm to list the admissible permutations.
Examples and numerical experiments illustrate the performance of the proposed
methods on the analyzed case studies.

\bibliographystyle{siam}	
\bibliography{bibliog}

\end{document}